\patchcmd{\section}{\scshape}{\bfseries}{}{}
\patchcmd{\subsubsection}{\itshape}{\bfseries}{}{}
\renewcommand{\@secnumfont}{\bfseries}
\newcommand{\Z}{\ensuremath{\mathbb Z}}
\newcommand{\R}{\ensuremath{\mathbb R}}
\newcommand{\HH}{\ensuremath{\mathbb H}}
\newcommand{\X}{\ensuremath{\mathbb X}}
\newcommand{\T}{\ensuremath{{\mathcal T}}}
\renewcommand{\rho}{\varrho}
\renewcommand{\phi}{\varphi}
\newtheorem{thm}{Theorem}[section]
\newtheorem{defi}[thm]{Definition}
\newtheorem{lem}[thm]{Lemma}
\theoremstyle{remark}
\newtheorem{rem}[thm]{Remark}
\begin{document}

\title{Symmetries of Monocoronal Tilings}

\author{Dirk Frettl\"oh}
\address{Technische Fakult\"at, Univ.~Bielefeld}
\email{dirk.frettloeh@udo.edu}
\author{Alexey Garber}
\address{Department of Geometry and Topology, Faculty of Mechanics and Mathematics,
Moscow State University}
\email{alexeygarber@gmail.com}
\begin{abstract}
The vertex corona of a vertex of some tiling is the vertex together with the 
adjacent tiles. A tiling where all vertex coronae 
are congruent is called monocoronal. We provide a classification of monocoronal
tilings in the Euclidean plane and derive a list of all possible symmetry 
groups of monocoronal tilings. In particular, any monocoronal tiling with
respect to direct congruence is crystallographic, whereas any monocoronal
tiling with respect to congruence (reflections allowed) is either
crystallographic or it has a one-dimensional translation group. 
Furthermore, bounds on the number of the dimensions of the translation 
group of monocoronal tilings in higher dimensional Euclidean space are obtained.
\end{abstract}

\maketitle

\section{Introduction}

The question how local properties determine global order of spatial structures
arises naturally in many different contexts. If the structures under
consideration are crystallographic structures, for instance, 
tilings are used frequently as models for these structures. In this context, 
one studies how local properties of a tiling determine its global properties. 

To mention just a few results in this context for tilings: A tiling is 
called {\em monohedral} if all tiles are congruent. A tiling is called
{\em isohedral} if its symmetry group acts transitively on the tiles.
Hilbert's 18th problem asked whether there is a tile admitting a monohedral 
tiling, but no isohedral tiling. (In fact, Hilbert asked the question
for tilings in three dimensions.) Heesch gave an affirmative answer in 
1935 by proposing a planar non-convex tile with this property
\cite{heesch}, \cite{gs}. A classification of all isohedral tilings by 
convex polygons was obtained by Reinhardt \cite{reinh}, see also 
Section 9 of the wonderful book by Gr\"unbaum and Shephard \cite{gs}. 
In contrast, a classification of all monohedral 
tilings by convex polygons has not been obtained yet. For instance,
it is still unknown which types of convex pentagons admit monohedral
tilings of the Euclidean plane \cite[Section 9]{gs}.

In a similar manner, a {\em monogonal} tiling is one in which each
vertex star --- i.e., a vertex together with its incident edges ---
is congruent to any other vertex star. A tiling is called {\em isogonal}, 
if its symmetry group acts transitively on the vertex stars. A 
classification of isogonal tilings is contained in \cite[Section 6]{gs}.
There are 11 combinatorial types of isogonal tilings and 91 types
of (unmarked, normal) isogonal tilings altogether.
Each of the 11 combinatorial types of isogonal tilings can be realised
by an {\em Archimedean} (aka {\em uniform}) tiling, that is a vertex
transitive tiling by regular polygons (which then is automatically isogonal)
\cite[Section 2]{gs}.

The Local Theorem for tilings yields a necessary and sufficient criterion for
a tiling to be crystallographic in terms of the number and the symmetries
of $k$-th tile coronae in the tiling \cite{dol-sch1}. (The $0$th tile corona 
of a tile $T$ is $T$ itself. The $k$-th tile corona of $T$ is the set of 
all tiles sharing a facet with some tile of the $k-1$-th tile corona of $T$.) 
As a consequence of this result, a monohedral tiling is isohedral if and only 
if all first tile coronae in $\T$ are congruent \cite{dol-sch2}. 

In light of the facts above it seems natural to explore tilings in which all 
vertex coronae are congruent. We will call these tilings {\em monocoronal}.
The {\em vertex corona} of a vertex $x$ in a tiling is $x$ together with the
tiles incident to $x$. Thus, monocoronal is a stronger property than
monogonal. 

The main results of this paper are obtained for tilings in the Euclidean 
plane $\R^2$ (Section \ref{sec:plane}). It turns out that 
a tiling in $\R^2$ is necessarily isogonal (hence crystallographic) if
all vertex corona are directly congruent (i.e., mirror images forbidden). 
This is stated in Theorem \ref{thm:e2-dir}.
In contrast, if all vertex corona of some tiling $\T$ are congruent, but
not necessarily directly congruent (i.e., mirror images allowed), then 
the translation subgroup of the symmetry group of $\T$ can 
be one-dimensional. In particular, $\T$ may be neither crystallographic 
nor isogonal. This is Theorem \ref{thm:e2-cong}. These results are obtained 
using a complete classification of monocoronal tilings.
This classification is contained in Appendices  \ref{class-facetoface} 
and \ref{class-nonfacetoface}. 

In Section \ref{sec:highdim} the results of Section \ref{sec:plane} are
used to obtain some monocoronal tilings in higher dimensional 
Euclidean space with small translation groups. Section \ref{sec:concl}
briefly illustrates the situation in hyperbolic spaces and states some
suggestions for further work. 

\subsection{Definitions and Notations}
Let $\X$ be a Euclidean space $\R^d$ or a hyperbolic space $\HH^d$. 
A {\em tiling} is a countable collection $\{T_1, T_2, \ldots\}$ of compact 
sets $T_i$ (the {\em tiles}) that is a covering (i.e., $\bigcup_i T_i = \X$) 
as well as a packing (i.e. $\mathring{T_i} \cap \mathring{T_j}=\varnothing$ 
if $i \ne j$, where $\mathring{T}$ denotes the interior of $T$). Here the tiles 
will almost always be convex polytopes; if not, it is mentioned explicitly.
A {\em vertex} of a tiling $\T$ is a point $x$ such that $x$ is a vertex of 
at least one tile in $\T$. Note that in general (higher dimensions, 
non-convex tiles) a proper definition of a vertex of a tiling can be 
problematic, compare \cite{gs} or \cite{FG}. Our definition is tailored to
monocoronal tilings. It agrees with the usual definitions and properties
of a vertex of a tiling if one considers only planar tilings by convex 
polygons. For instance, $x$ is a vertex of a tiling $\T$ if and only if $x$ is 
an isolated point of the intersection of some tiles in $\T$; or: every
vertex of a tile (in the usual sense: vertex of a polygon) is a vertex of the
tiling, compare \cite{FG}.

A tiling is called {\em face-to-face} if the intersection of two tiles is 
always an entire face (possibly of dimension less
than $d$, possibly empty) of both of the tiles. In particular, planar 
tilings by convex polygons are face-to-face if the intersection of two tiles is either an
entire edge of both of the tiles, or a vertex of both of the tiles, or empty.

\begin{defi}
Let $x$ be a vertex in some tiling $\T$. The {\em vertex-corona} of $x$
is the set of all tiles $T \in \T$ such that $x \in T$, together with $x$.
\end{defi}

In tilings by non-convex tiles, two different vertices may have the same set
of adjacent tiles. Thus it is necessary to keep track of the defining vertex $x$ 
in the definition of the vertex corona.

\begin{defi}
If all vertex-coronae in a tiling $\T$ are directly congruent (mirror images
forbidden), then we say that $\T$ is a {\em monocoronal tiling up
to rigid motions}. If all vertex-coronae in a tiling $\T$ are congruent 
(mirror images allowed), then we say that $\T$ is a {\em monocoronal 
tiling up to congruence}. If it is clear from the context which
of the both terms is meant then we will say briefly that $\T$ is a
{\em monocoronal tiling}.  
\end{defi}

The {\em symmetry group} of a tiling $\T$ is the set of all isometries
$\varphi: \X \to \X$ such that $\varphi(\T)=\T$. 
In the sequel we are interested in possible symmetry groups of
monocoronal tilings. In particular we ask whether only
crystallographic groups can occur.

\begin{defi} 
A tiling in $\R^d$ is called {\em $k$-periodic} if the symmetry group contains
exactly $k$ linearly independent translations. A $0$-periodic tiling is 
called {\em non-periodic}.
\end{defi}

\begin{defi}
The tiling $\mathcal{T}$ is called {\em crystallographic} if its symmetry group 
has compact fundamental domain. Otherwise it is called {\em non-crystallographic}.
\end{defi}

In Euclidean space $\mathbb{R}^d$ the two notions ``crystallographic''
and ``$d$-periodic" are equivalent.

\begin{thm}[Bieberbach, 1911-1912, \cite{Bieb1, Bieb2}]
A tiling $\mathcal{T}$ of $\mathbb{R}^d$ is crystallographic iff it is 
$d$-periodic.
\end{thm}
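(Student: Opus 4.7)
The direction ``$d$-periodic $\Rightarrow$ crystallographic'' is elementary. If $G:=\text{Sym}(\T)$ contains $d$ linearly independent translations $\tau_1,\ldots,\tau_d$, then the sublattice $\Lambda=\Z\tau_1\oplus\cdots\oplus\Z\tau_d\leq G$ has the compact parallelepiped spanned by $\tau_1,\ldots,\tau_d$ as a fundamental domain, and a fundamental domain of the larger group $G$ can be chosen inside a fundamental domain of $\Lambda$. Hence $G$ admits a compact fundamental domain, so $\T$ is crystallographic.

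For the converse, suppose $G$ has a compact fundamental domain. I would first verify that $G$ is a \emph{discrete} subgroup of $\text{Isom}(\R^d)=\R^d\rtimes O(d)$. This uses local finiteness of $\T$: any sequence $\varphi_n\in G$ with $\varphi_n\to\text{id}$ eventually satisfies $\varphi_n(T_0)=T_0$ for a fixed tile $T_0$ (only finitely many tiles lie near $T_0$), and the stabilizer of the compact tile $T_0$ inside $\text{Isom}(\R^d)$ is finite, so $\varphi_n=\text{id}$ for large $n$. Let $\pi:G\to O(d)$ denote projection to the linear part, with kernel $L=\ker\pi$ the translation subgroup of $G$.

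The central step --- and the main obstacle --- is to show that $\pi(G)\subset O(d)$ is \emph{finite}. This is the technical heart of Bieberbach's theorem. If $\pi(G)$ were infinite, by compactness of $O(d)$ one could choose $g,h\in G$ with $\pi(g)\neq\pi(h)$ arbitrarily close. The linear part of the commutator $[g,h]$ equals $[\pi(g),\pi(h)]$, which is quadratically small in $\|\pi(g)-\pi(h)\|$. Iterating this commutator construction yields a sequence of non-identity elements of $G$ whose linear parts converge to the identity; using cocompactness of the action of $G$ on $\R^d$, one may further conjugate these elements to keep their translational parts bounded. The resulting accumulation of distinct elements of $G$ in $\text{Isom}(\R^d)$ contradicts discreteness. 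This commutator argument is the one step I expect to be genuinely delicate.

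Once $\pi(G)$ is known to be finite, $L$ has finite index in $G$, so $L$ itself acts cocompactly on $\R^d$: a fundamental domain of $L$ is a finite union of translates of a fundamental domain of $G$, hence is compact. But a discrete subgroup of the abelian group $(\R^d,+)$ with compact fundamental domain must be a lattice of full rank $d$ --- otherwise its $\R$-span would be a proper subspace $V\subsetneq\R^d$ and the quotient $\R^d/L$ would be unbounded in the directions perpendicular to $V$. This provides the required $d$ linearly independent translations in $G$, showing that $\T$ is $d$-periodic.
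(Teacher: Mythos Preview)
The paper does not prove this statement at all: it is quoted as a classical result and attributed to Bieberbach via the citations \cite{Bieb1,Bieb2}, with no argument given. So there is nothing in the paper to compare your proposal against.

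That said, your outline is a faithful sketch of the standard proof of Bieberbach's first theorem. A couple of small remarks. In your discreteness step you assert that ``the stabilizer of the compact tile $T_0$ inside $\text{Isom}(\R^d)$ is finite''; this is not true for an arbitrary compact set (a round ball has stabilizer $O(d)$), so you are tacitly using that the tiles are convex polytopes --- then the stabilizer fixes the centroid and permutes the finitely many vertices, hence is finite. This is consistent with the standing assumption in the paper, but worth making explicit. The commutator argument you flag as ``genuinely delicate'' is indeed the nontrivial part; your description of the mechanism (linear part of $[g,h]$ quadratically small, iterate and use cocompactness to bound translational parts, contradict discreteness) is correct in spirit, but a full proof requires care in controlling both the rotational and translational parts simultaneously and is several pages of work. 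The remaining steps (finite index of $L$, a cocompact discrete subgroup of $\R^d$ is a full-rank lattice) are straightforward as you say.
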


Since in hyperbolic spaces there is no natural meaning of ``translation''
it does not make sense to speak of ``periodic'' or ``non-periodic''
tilings in $\HH^d$. Nevertheless we can ask whether a hyperbolic 
monocoronal tiling is necessarily crystallographic. This is answered
in Section \ref{sec:concl}.

We will use orbifold notation to denote planar symmetry groups in the sequel, 
compare \cite{BCG}. For instance, $\ast442$ denotes the symmetry group of the 
canonical face-to-face tiling of $\R^2$ by unit squares. For a translation of 
orbifold notation into other notations see \cite{BCG} or \cite{wik-orb}.
In orbifold notation the 17 crystallographic groups in the Euclidean plane 
(``wallpaper groups'') are
\[ \ast632, \, \ast442, \, \ast333, \, \ast2222, \, \ast \ast, \, \ast
\times, \, \times \times, \quad 632, \, 442, \, 333, \, 2222, \, \circ,
\quad 4\ast 2, \, 3 \ast 3, \, 2 \ast 22, \, 22 \ast, 22 \times; \]
and the seven frieze groups are
\[ \infty \infty, \, \ast \infty \infty, \, \infty \ast, \, \infty \times,
\, 22 \infty, \, 2 \ast \infty, \, \ast 22 \infty. \]

\section{Euclidean plane}\label{sec:plane}

This section is dedicated to the question: What are possible symmetry 
groups of monocoronal tilings in $\R^2$?
It turns out that the answer is rather different depending on whether we 
consider monocoronal tiling up to rigid motions, or 
up to congruence. In the sequel the main results are stated first. It
follows a subsection containing an outline of the classification of all 
monocoronal tilings that are face to face, then a subsection sketching the
classification of monocoronal tilings that that are not face-to-face, 
and finally the proofs of Theorems \ref{thm:e2-dir} and \ref{thm:e2-cong}
are given.

\begin{thm} \label{thm:e2-dir}
Every monocoronal tiling up to rigid motions has one of the 
following 12 symmetry groups:
\[ \ast632, \, \ast442, \, \ast333, \, \ast2222, \, \quad 632, \, 442, \, 333, 
\, 2222, \quad 4\ast2, \, 3 \ast 3, \, 2 \ast 22, \, 22 \ast. \]
In particular, every such tiling is crystallographic, every such tiling 
has a center of rotational symmetry of order at least 2, and its symmetry
group acts transitively on the vertices.
\end{thm}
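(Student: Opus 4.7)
The plan is to reduce the theorem to a case analysis via the complete classification of monocoronal tilings in $\R^2$ obtained in Appendices \ref{class-facetoface} and \ref{class-nonfacetoface}. That classification produces a finite list of vertex-corona types covering both the face-to-face and non-face-to-face settings. For each entry on the list one reads off the symmetry group of the resulting tiling and checks that it coincides with one of the twelve groups named in the statement. Crystallographicity then follows from the Bieberbach theorem quoted earlier once two linearly independent translations are exhibited, and the presence of a rotation of order at least $2$ together with vertex transitivity can be verified directly from the orbifold description of each group and from the location of the rotation centres in the corresponding tiling.

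Before invoking the classification I would isolate the structural principle that underlies the restriction to these twelve groups. Let $x$ be a vertex of $\T$ of valence $n$ and let $(\alpha_1,\ell_1,\ldots,\alpha_n,\ell_n)$ encode the cyclic sequence of interior angles at $x$ and of lengths of the edges emanating from $x$. Direct congruence of coronae forces this sequence to coincide, up to a cyclic shift, at every vertex of $\T$. Since each $\alpha_i$ is an interior angle of a convex polygon and $\sum_i \alpha_i = 2\pi$, only finitely many such sequences are admissible. A more delicate observation, made during the classification, is that matching along an edge $xy$ relates the sequences at $x$ and at $y$ in a way that, together with the cyclic-shift ambiguity, forces each admissible sequence to exhibit a nontrivial cyclic self-identification; this corresponds to a local rotational automorphism of the corona which then extends to a global rotation of $\T$. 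Coupled with a translation produced from two coronae of the same type along a common edge, this delivers a crystallographic group.

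Vertex transitivity is shown by a propagation argument: for any two vertices $x, y$ the unique orientation-preserving isometry $\phi$ mapping the corona of $x$ onto the corona of $y$ is extended to a global symmetry of $\T$ by induction on graph-distance from $x$, the monocoronal hypothesis together with the already-determined images of the incoming edges pinning down the next ring of tiles uniquely. The main obstacle is the classification itself, and in particular its non-face-to-face part: one must enumerate all admissible corona types, discard those that fail to extend to a tiling of the whole plane, and verify that every surviving realisation has one of the twelve listed symmetry groups. Once that bookkeeping is complete, comparison with the seventeen wallpaper groups shows that precisely $\circ$, $\ast\ast$, $\ast\times$, $\times\times$ and $22\times$ are ruled out, leaving the twelve groups claimed.
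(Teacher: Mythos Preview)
Your overall plan---reduce to the classification in the appendices and read off the symmetry groups---is exactly what the paper does. The paper simply goes through every figure marked (D), exhibits one tiling realising each of the twelve groups, and then rules out the remaining five by two direct checks on the list: every (D) tiling has a centre of $2$- or $3$-fold rotation (killing $\circ$, $\ast\ast$, $\ast\times$, $\times\times$), and no (D) tiling has a glide reflection without a reflection (killing $22\times$). Vertex transitivity is then verified figure-by-figure using the crystallographic structure. So the paper's argument is entirely a finite bookkeeping exercise on the classified list.

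Where your proposal diverges is in the two ``conceptual'' paragraphs, and both have genuine gaps. Your propagation argument for vertex transitivity asserts that there is a \emph{unique} orientation-preserving isometry taking the corona at $x$ to the corona at $y$; this is false whenever the corona itself has nontrivial rotational symmetry (e.g.\ the square tiling, where there are four such isometries). More seriously, the inductive extension step---``the already-determined images of the incoming edges pin down the next ring uniquely''---needs real work: you must show that two directly congruent coronae at the same vertex which agree on the two tiles flanking one incoming edge must agree everywhere, and this requires analysing the possible rotational stabiliser of that edge-pair. Note that some version of this step \emph{must} fail in the congruence (reflections-allowed) case, since there the theorem is false; your sketch does not isolate where the direct-congruence hypothesis enters. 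Similarly, the ``structural principle'' paragraph (matching along an edge forces a cyclic self-identification of the corona, hence a local rotation extending globally) is suggestive but not a proof as written.

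If you could make the propagation argument rigorous it would be a genuinely nicer route than the paper's: it would give vertex transitivity directly, whence isogonality, and then one could appeal to the known classification of isogonal tilings rather than the full monocoronal classification. But as it stands the paper's brute-force verification on the finite list is the only complete argument, and your proposal should either commit to that or fill the gaps above. Also note that your final sentence does not explain why $22\times$ is excluded: that group \emph{does} contain rotations of order $2$, so it is not eliminated by the rotation check alone; the paper handles it by the separate glide-reflection observation.
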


\begin{thm} \label{thm:e2-cong}
Every monocoronal tiling up to congruence is either 
1-periodic, or its symmetry is one out of 16 wallpaper groups:
any except $\ast \times$. If such tiling is 1-periodic then its symmetry 
group is one of four frieze groups: $\infty\infty$, $\infty \times$, 
$\infty \ast$, or $22\infty$.
In particular, every such tiling is crystallographic or 1-periodic.
\end{thm}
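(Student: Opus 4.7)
The plan is to reduce the statement to an inspection of the classification of monocoronal tilings up to congruence given in Appendices \ref{class-facetoface} and \ref{class-nonfacetoface}. Once every monocoronal tiling (face-to-face or not) has been enumerated, one reads off its symmetry group and verifies that the resulting list is exactly the one claimed. The core conceptual point compared with Theorem \ref{thm:e2-dir} is that, since reflected copies of the vertex corona are now admissible, the tiling may contain mirror lines that \emph{reverse orientation of coronae}. This opens up two qualitatively new phenomena: (i) tilings obtained from a fundamental strip by reflection across a line, which have only 1-periodic symmetry; and (ii) additional wallpaper groups involving reflections. The classification will make these possibilities explicit.

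First, I would work through the face-to-face classification. For each entry, the shapes meeting at a vertex and the combinatorial/angular data of the corona force a local set of admissible neighbours; propagating these constraints yields a small list of candidate global patterns. For each candidate I would identify its translation lattice by finding two independent vectors that map coronae to coronae, and then determine rotation centres and mirror axes compatible with the corona data. This case-check should reproduce precisely the 16 wallpaper groups listed, and, whenever two translation directions are forced, rule out 1-periodicity. To show existence I would simply exhibit one concrete monocoronal tiling realising each of the 16 groups; most come from small perturbations of the Laves tilings and the isogonal tilings classified in \cite{gs}.

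Next I would handle the non-face-to-face tilings. Here the new feature is that a tile can meet a neighbour along a shift of an edge, giving rise to tilings built by shifting or reflecting a strip of tiles along one axis while leaving the perpendicular direction aperiodic. Inspection of this part of the classification shows that the only genuinely 1-periodic possibilities arise when successive strips are related by a pure translation along the strip (giving $\infty\infty$), a glide ($\infty\times$), a perpendicular reflection ($\infty\ast$), or a half-turn ($22\infty$); adding a reflection along the strip direction would force a second independent translation and push the tiling back into the crystallographic list. This argument simultaneously produces the four frieze groups in the statement and shows that 0-periodic monocoronal tilings cannot occur, since at every stage of the classification the local matching rules force at least one global translation direction.

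The main obstacle I expect is the explicit exclusion of the wallpaper group $\ast\times$. Unlike the other groups, $\ast\times$ combines a family of parallel mirror lines with a glide reflection whose axis is parallel to those mirrors; to forbid it one must show that in every face-to-face and non-face-to-face monocoronal configuration, the presence of such mirrors forces the glide axis to coincide with one of them, upgrading the symmetry to $\ast\ast$ (or higher). I would handle this by revisiting, within the classification, the specific corona types that admit parallel mirror lines, and checking in each case that the glide direction must be orthogonal to the mirrors (yielding $\ast\ast$ or $2\ast 22$) rather than parallel. A secondary, more routine bookkeeping obstacle is to verify that every one of the remaining 16 wallpaper groups and each of the four frieze groups is actually attained by some entry of the classification, so that the list in the statement is sharp.
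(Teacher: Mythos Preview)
Your overall strategy---reduce to the classification in the appendices and read off symmetry groups---is exactly what the paper does. But two concrete points in your plan are wrong and would derail the argument.

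First, you locate the 1-periodic tilings in the non-face-to-face part of the classification. In fact most of them are face-to-face: four of the six families admitting non-crystallographic symmetry (Figures \ref{pict:6tr:4seg-2pairs2}, \ref{pict:3tr-2q:non-per1}, \ref{pict:3tr-2q:non-per2}, \ref{pict:4quad-nonperiodic}) are face-to-face. The mechanism producing 1-periodicity is not edge-shifting but a \emph{layer} structure: each such tiling decomposes into horizontal strips that alternate between a mirror-symmetric layer ($I$ of isosceles triangles or $R$ of rectangles) and a chiral layer coming in two reflected versions ($P_0,P_1$ of parallelograms or $T_0,T_1$ of non-isosceles triangles). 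The tiling is then encoded by a biinfinite binary sequence $(i_k)\in\{0,1\}^{\Z}$ recording which chiral version appears in each slot. This structural observation is the key lemma you are missing; once you have it, the remaining wallpaper groups $\ast\ast$, $\times\times$, $\circ$ and all four frieze groups are realised by choosing $(i_k)$ appropriately (periodic for the wallpaper groups, non-periodic for the frieze groups).

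Second, your exclusion arguments do not work as stated. For the three forbidden frieze groups $\ast\infty\infty$, $2\ast\infty$, $\ast22\infty$, the point is not that ``a reflection along the strip direction would force a second independent translation''---$\ast\infty\infty$ has vertical mirrors and is still a frieze group. The actual obstruction is that each of these groups contains a mirror \emph{orthogonal} to the layers, and such a mirror sends a $P_0$-layer to a $P_1$-layer (resp.\ $T_0$ to $T_1$), so it cannot be a symmetry. For $\ast\times$, your plan to ``check that the glide direction must be orthogonal to the mirrors'' across all corona types is both vaguer and broader than needed. The paper's argument works entirely inside the six layered families: any mirror must be parallel to the layers and in fact must be the central axis of an $R$-layer; any glide reflection with axis parallel to the layers must also have its axis on such a central line, and its translational part is an integer multiple of the rectangle edge length---hence it factors as (reflection in that axis)$\circ$(translation already in the symmetry group), so no ``new'' glide reflection exists and $\ast\times$ is impossible.
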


\begin{rem}
It turns out that the 1-periodic tilings in Theorem \ref{thm:e2-cong}
consist of 1-periodic layers that are stacked according to a non-periodic
one-dimensional sequence (see for instance Figure \ref{pict:3dim-block},
page \pageref{pict:3dim-block}).
Such a sequence may look simply like 
$\ldots 1,1,1,0,1,1,1, \ldots$ (all 1s, one 0) or, more 
interestingly, like a non-periodic Fibonacci
sequence $\ldots0,1,1,0,1,0,1,1,0,1,1,0,1,0 \ldots$, where each finite
sub-sequence occurs infinitely often with bounded gaps. The study
of nonperiodic symbolic sequences is an interesting field of study
on its own, compare for instance \cite{fogg} or \cite{baa-grim}.
\end{rem}

\subsection{Face-to-face tilings}\label{subsec:ftf}

We start with determining the possible combinatorial types of vertex coronae 
in a monocoronal tiling; that is, in which way can every vertex 
be surrounded by $n$-gons. This first part is already contained in
\cite{gs}: Our Lemma \ref{lem:combtype} 
corresponds to Equation (3.5.5) in \cite{gs}, our Table \ref{tab:list}
corresponds to Table 2.1.1 in \cite{gs} (which is also applicable to 
the more general case considered here). For the sake of completeness
we include the line of reasoning in the sequel.

\begin{lem} \label{lem:combtype}
Let $\T$ be a monocoronal tiling such that every vertex is 
incident to $n$ polygons with $a_1,\ldots, a_n$ many edges, respectively. Then 
\[ \sum_{i=1}^n \frac{1}{a_i}=\frac{n}{2}-1.\]
\end{lem}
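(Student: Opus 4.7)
The plan is to apply Euler's formula to a growing family of finite patches of $\T$ and use the monocoronal hypothesis to express both the edge count and the tile count as linear functions of the vertex count, then let the patch exhaust $\R^2$.

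Concretely, I would fix a basepoint $p$ and let $D_R$ denote the union of all tiles that meet the closed disk of radius $R$ around $p$. Since we are in the face-to-face setting of Subsection \ref{subsec:ftf} and the tiles are convex polygons, $D_R$ is a topological disk; writing $V_R, E_R, F_R$ for its numbers of vertices, edges, and tiles, Euler's formula gives $V_R - E_R + F_R = 1$. At every \emph{interior} vertex of $D_R$ the monocoronal assumption produces exactly $n$ incident edges, so a handshake argument yields $2E_R = nV_R + O(R)$, the error absorbing the contribution of vertices lying on $\partial D_R$.

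For the tile count I would use a fractional charging argument: each vertex $v$ donates mass $1/a_i$ to the $i$-th tile in its corona. A tile with $k$ vertices then receives total mass $k\cdot(1/k) = 1$, so summing over vertices gives
\[
F_R \;=\; V_R \sum_{i=1}^{n} \frac{1}{a_i} \;+\; O(R).
\]
Substituting both identities into Euler's formula and dividing by $V_R$, the constant $1$ and the two $O(R)$ error terms all vanish when $R \to \infty$, since $V_R = \Theta(R^2)$. What remains is
\[
1 - \frac{n}{2} + \sum_{i=1}^{n} \frac{1}{a_i} \;=\; 0,
\]
which rearranges to the claim.

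The main obstacle is purely bookkeeping: verifying that the boundary contributions really are of lower order than $V_R$. This step is clean in our setting because monocoronality forces only finitely many congruence classes of tile (they all appear in a single vertex corona), hence a uniform bound on tile diameters and inradii; therefore $|\partial D_R|$ grows linearly in $R$ while $V_R$ grows quadratically, making the errors genuinely $o(V_R)$. Beyond that, the proof is a direct application of Euler's formula together with the two counting identities above.
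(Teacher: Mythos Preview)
Your argument is correct, but it takes a genuinely different route from the paper. The paper argues metrically: at any vertex the incident angles sum to $2\pi$, while an $a_i$-gon contributes on average $\tfrac{a_i-2}{a_i}\pi$ per incident vertex (since its interior angles total $(a_i-2)\pi$ spread over $a_i$ vertices); equating the two averages and dividing by $\pi$ gives $\sum_i \tfrac{a_i-2}{a_i}=2$, which rearranges to the claim in one line with no limiting process. Your proof is instead purely combinatorial: Euler's formula on exhausting patches, a handshake for $E$, and a fractional charging for $F$. What you gain is that the identity is seen to be metric-free---it holds for any planar face-to-face tiling whose vertices all have the same \emph{combinatorial} corona, not just congruent ones. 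What the paper's approach buys is brevity: the angle sum absorbs the Euler-formula content (via the $(a_i-2)\pi$ angle-sum for convex $a_i$-gons), so no boundary bookkeeping or $R\to\infty$ limit is needed.
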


\begin{proof}
Let $k_i$ denote the number of $i$-gons in the corona of a vertex in a
monocoronal tiling $\mathcal{T}$. 
Consider the average sum of angles at every vertex. On one hand, it 
is equal to $2\pi$ since $\mathcal{T}$ is a tiling. On the other hand, every 
$i$-gon adds an angle $\frac{i-2}{i}\pi$ in average. By summing up 
contributions of all polygons and dividing by $\pi$ we obtain 
${\displaystyle \sum\limits_{i\geq 3} \frac{k_i(i-2)}{i}=2}$.
Let $n$ denote the total number of polygons incident to a vertex $x$ in 
$\mathcal{T}$ (i.e. $n=\sum k_i$) then this identity becomes
\[\sum\limits_{i\geq 3}\frac{k_i}{i}=\frac{n}{2}-1.\]
Reformulating this with respect to $a_i$ yields the claim.
\end{proof}

One can easily check that the equation in Lemma \ref{lem:combtype} is possible 
only in the cases listed in Table \ref{tab:list} (where only non-zero $k_i$'s are
listed).

\begin{table}
\begin{tabular}{|l|l|c|}
\hline
$n=6$ &  $k_3=6$ & \\
\hline
$n=5$ &  $k_3=4$, $k_6=1$ & \\
& $k_3=3$, $k_4=2$ & \\
\hline
$n=4$ &  $k_3=2$, $k_4=1$, $k_{12}=1$ & (i)\\
 & $k_3=2$, $k_6=2$ & \\
 & $k_3=1$, $k_4=2$, $k_6=1$ & \\
 & $k_4=4$ & \\
\hline
$n=3$ &  $k_3=1$, $k_7=1$, $k_{42}=1$ & (i)\\
 & $k_3=1$, $k_8=1$, $k_{24}=1$ & (i)\\
 & $k_3=1$, $k_9=1$, $k_{18}=1$ & (i)\\
 & $k_3=1$, $k_{10}=1$, $k_{15}=1$ & (i)\\
 & $k_3=1$, $k_{12}=2$ & (i)\\
 & $k_4=1$, $k_5=1$, $k_{20}=1$ & (i)\\
 & $k_4=1$, $k_6=1$, $k_{12}=1$ & \\
 & $k_4=1$, $k_8=2$ & \\
 & $k_5=2$, $k_{10}=1$ & (i)\\
 & $k_6=3$ & \\
\hline
\end{tabular}
\caption{Possible combinatorial types of vertex coronae following from
Lemma \ref{lem:combtype}. Cases that are impossible for further reasons
are marked with (i). \label{tab:list}}
\end{table}

Not all cases with $n=3$ and $n=4$ can be realized by a tiling in the
Euclidean plane. Impossible cases are marked by (i) in Table \ref{tab:list}. 
For example, there is no tiling in the Euclidean plane such that every vertex 
is incident to one triangle, one $7$-gon and one $42$-gon. 
Indeed, the edges of any triangle in such a tiling in cyclic
order would belong alternately to 7-gons and to 42-gons. Since a triangle
has an odd number of edges this is impossible. 

In the same way one can prove that if $n=3$ then only four cases can be 
realized: one triangle and two $12$-gons, or three hexagons, or one quadrilateral 
and two octagons, or one quadrilateral, hexagon and one $12$-gon. If $n=4$ then 
the case with two triangles, one quadrilateral and one $12$-gon cannot be realized.

The full list of all possible monocoronal face-to-face tilings presented in Appendix
\ref{class-facetoface}. In the sequel the general line of reasoning is illustrated
by a detailed discussion of a typical example, namely the case where the vertex 
corona consists of one triangle, two quadrilaterals, and one hexagon.

First we consider the topological structure of a tiling $\mathcal{T}$ 
with prescribed (combinatorial) polygons in the corona of every vertex. For 
example, if $n=6$ then the tiling $\mathcal{T}$ will be combinatorially 
equivalent to the tiling with regular triangles.

The example under consideration --- one triangle, two quadrilaterals, 
one hexagon --- is less trivial. In this case with  there are two possible
combinatorial coronas. They are shown in Figure \ref{pict:tr-2q-hex}.
\begin{figure}[!ht]
\begin{center}
\includegraphics[scale=2.5]{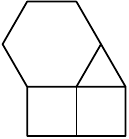}
\hskip 2cm 
\includegraphics[scale=2.5]{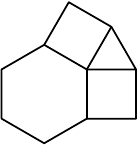}
\caption{Two theoretical cases of local structure with triangle, two quadrilaterals and hexagon.}\label{pict:tr-2q-hex}
\end{center}
\end{figure}
The constellation in the left part of the figure can not be realized in 
any tiling of the plane. The reason is the same as above: quadrilaterals 
and hexagons need to alternate in a cyclic way along the edges of some triangle,
yielding a contradiction.  
So the topological structure of the tiling that we are looking for will 
be the same as in Figure \ref{pict:tr-2q-hex:top}.
\begin{figure}[!ht]
\begin{center}
\includegraphics[scale=1]{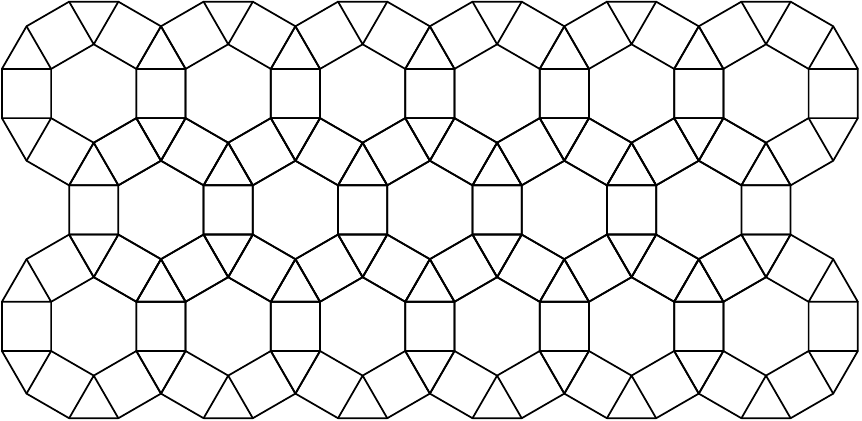}
\caption{Topological structure of a tiling with unique vertex corona consisting of 
a triangle, two quadrilaterals and a hexagon.}\label{pict:tr-2q-hex:top}
\end{center}
\end{figure}

The next step is to mark equal edges or predefined angles. (This is the
point where we go beyond \cite{gs}.) For example, in 
the unique topologically possible corona with triangle, two quadrilaterals and 
hexagon there is only one angle of triangle in every vertex, therefore all 
angles of all triangles in our tiling are equal to $\frac{\pi}3$. By the same 
reason all angles of all hexagons are equal to $\frac{2\pi}{3}$. 

We use the same strategy to mark edges of equal length. It is easy 
to see that all edges of all triangles are equal, since every corona 
contains only one equilateral triangle. Hence we mark all edges of all 
triangles with red color. The two edges of a hexagon that are incident to a given 
vertex $x$ may be of different lengths. Thus we mark them with blue and 
green, respectively. Then the edges of all hexagons are colored alternating 
with blue and green since every vertex must be incident to one blue and one 
green edge of a hexagon. Hence we have colored all edges in the corona except 
two of them (see Figure \ref{pict:tr-2q-hex:precolor}).
\begin{figure}[!ht]
\begin{center}
\includegraphics[scale=2.5]{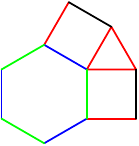}
\caption{Intermediate coloring of corona.}\label{pict:tr-2q-hex:precolor}
\end{center}
\end{figure}

Both uncolored edges belong to some hexagons so they can be blue or green.
First we will consider the case when green and blue are equal. Then both 
quadrilaterals in every corona are parallelograms. There are no more than 
two different angles in total in these quadrilaterals since in one corona 
there are only two angles of quadrilaterals. So metrically the corona of an 
arbitrary vertex $x$ looks as follows 
(see Figure \ref{pict:tr-2q-hex:nogreen}): a regular hexagon, two equal 
parallelograms that touch $x$ with different angles (or equal iff they 
are rectangles), and a regular triangle.
\begin{figure}[!ht]
\begin{center}
\includegraphics[scale=2.5]{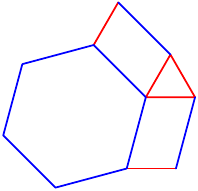}
\caption{First possible case of a corona with one triangle, two quadrilaterals, and one hexagon.}\label{pict:tr-2q-hex:nogreen}
\end{center}
\end{figure}

If the blue and the green edges are of different length then one of the 
uncolored edges in  Figure \ref{pict:tr-2q-hex:precolor} is green and the 
other one is blue. There are two arguments for that. The first one which works 
in this particular case is the following: if both edges are of the same 
color, say blue, then every corona does not have a green edge on the outer 
part of incident quadrilaterals, which is impossible. The second argument is 
general for all monocoronal tilings: in any corona with center 
$x$, the portion of green edges of quadrilaterals containing $x$ is the 
same as the portion of green edges of quadrilaterals not containing $x$. 
This means in this example: one quarter of all edges of quadrilaterals must 
be green.

So there are two possibilities of coloring all edges of the vertex corona 
under consideration. These are shown in Figure \ref{pict:tr-2q-hex:color}.
\begin{figure}[!ht]
\begin{center}
\includegraphics[scale=2.5]{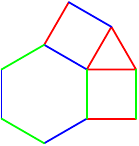}
\hskip 1cm
\includegraphics[scale=2.5]{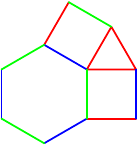}
\caption{Possible colorings of corona.}\label{pict:tr-2q-hex:color}
\end{center}
\end{figure}

In the left image both quadrilaterals are parallelograms. Moreover, every 
corona contains only one angle between red and blue edges, hence all 
these angles are equal. But four such angles are angles of one quadrilateral, 
so this quadrilateral is a rectangle. By the same reason the second 
quadrilateral is a rectangle, too. Now if we consider an arbitrary 
triangle then it is surrounded by rectangles that alternate cyclically 
along its edges: a rectangle with two red and two green edges is followed
by a rectangle with two red and  two blue edges. Again, since the
triangle has an odd number of edges, this yields a contradiction. 

By the same reason, in the right image all angles between red and blue 
edges are equal, and all angles between red and green edges are equal. 
So both quadrilaterals are equal isosceles trapezoids. Hence the metrical 
picture looks like shown in Figure \ref{pict:tr-2q-hex:metr}.

\begin{figure}[!ht]
\begin{center}
\includegraphics[scale=2.5]{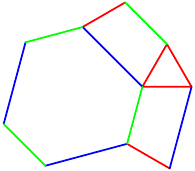}
\caption{The second possible corona with triangle, two quadrilaterals, and hexagon.}\label{pict:tr-2q-hex:metr}
\end{center}
\end{figure}

Putting everything together we obtain that there are two families of different 
monocoronal tilings whose vertex corona consists of one triangle, two quadrilaterals, 
and one hexagon. Both families are shown in Figure \ref{pict:tr-2q-hex:final}. 
Both families admit only crystallographic tilings since the metrical structure is 
uniquely defined by the structure of an arbitrary corona satisfying 
Figures \ref{pict:tr-2q-hex:nogreen} or \ref{pict:tr-2q-hex:metr}.
\begin{figure}[!ht]
\begin{center}
\includegraphics[width=0.45\textwidth]{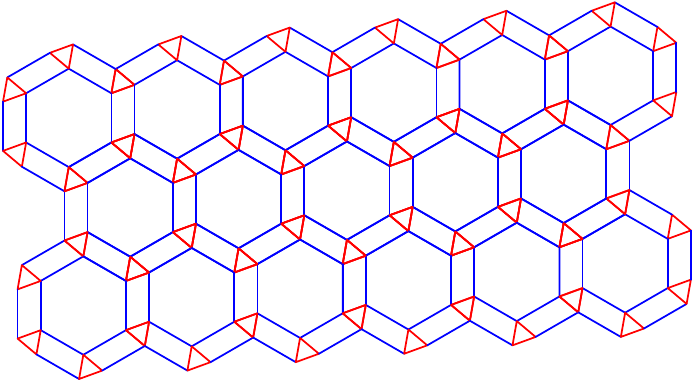}
\hfill
\includegraphics[width=0.45\textwidth]{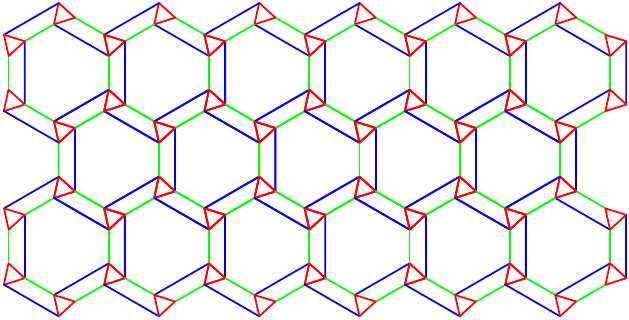}
\caption{Families of monocoronal tilings with one triangle, two quadrilaterals, and one hexagon.}\label{pict:tr-2q-hex:final}
\end{center}
\end{figure}

Proceeding in this manner for all possible cases yields the list of 
tilings in Appendix \ref{class-facetoface}.

\subsection{Non face-to-face tilings}

In the case of non-face-to-face tilings we can prove a lemma similar to Lemma 
\ref{lem:combtype-nonftf}. We assume that each vertex $x$ is contained in the 
relative interior of the edge of some polygon $P_x$. It is clear that there is 
exactly one such polygon.

\begin{lem} \label{lem:combtype-nonftf}
If there is a monocoronal tiling $\mathcal{T}$ such that every vertex is a vertex of polygons with number of edges $\{a_1,\ldots, a_n\}$ and lies in a side of one more polygon then \[\sum \frac{1}{a_i}=\frac{n-1}{2}.\]
\end{lem}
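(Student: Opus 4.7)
The plan is to adapt the double-counting argument from the proof of Lemma \ref{lem:combtype}, with the only new ingredient being the angle contribution of the polygon $P_x$ that contains $x$ in the relative interior of one of its edges.

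First I would establish a local angle identity at each vertex $x$. Since the tiles are convex polygons and $x$ lies strictly in the relative interior of an edge of $P_x$, the boundary of $P_x$ passes through $x$ along a straight line, so a small neighborhood of $x$ inside $P_x$ is a half-disk. Hence $P_x$ contributes exactly $\pi$ to the total angular measure $2\pi$ around $x$, and consequently the vertex-angles at $x$ of the $n$ polygons having $x$ as a corner must sum to $2\pi - \pi = \pi$.

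Next I would run the same averaging argument as in Lemma \ref{lem:combtype}. Because $\mathcal{T}$ is monocoronal, the local structure is the same at every vertex, so one can replace each vertex-angle by the average angle of its polygon, and $\frac{(a-2)\pi}{a}$ is the average interior angle of an $a$-gon. Equating the two expressions for the sum of vertex-angles at $x$ yields
\[
\sum_{i=1}^n \frac{a_i-2}{a_i}\,\pi \;=\; \pi.
\]
Rearranging gives $n - 2\sum_{i=1}^n \frac{1}{a_i} = 1$, which is exactly the desired identity $\sum_{i=1}^n \frac{1}{a_i} = \frac{n-1}{2}$.

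The one point that deserves some care is the justification of the ``average angle'' step, since in a non-face-to-face tiling each polygon may carry additional tiling-vertices on its edges. This causes no harm here: every corner of every $a$-gon is still a vertex of $\mathcal{T}$, so double-counting (vertex, corner-polygon)-incidences gives $V k_a = a N_a$ in a large region (up to boundary terms), exactly as in the face-to-face case, and combining with $\sum_a N_a(a-2) = V$ (the global version of the local identity from Step~1 with the extra $\pi$ absorbed) yields the same algebraic relation. The anticipated main obstacle—keeping track of the extra polygon $P_x$—dissolves once one notes that it contributes a fixed, universal $\pi$ at every vertex, so the whole proof reduces to the face-to-face calculation with $2\pi$ replaced by $\pi$.
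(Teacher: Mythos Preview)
Your proof is correct and follows essentially the same approach as the paper: both observe that the extra polygon $P_x$ contributes a flat angle $\pi$ at each vertex, leaving $\pi$ for the remaining $n$ polygons, and then apply the same averaging argument as in Lemma~\ref{lem:combtype} to obtain $\sum_i \frac{a_i-2}{a_i}=1$. Your additional paragraph on the double-counting justification is more explicit than the paper's treatment but does not change the underlying argument.
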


\begin{proof}
Let $k_i$ denote the number of $i$-gons (not including $P_x$) in the corona of the vertex $x$ in a monocoronal tiling $\mathcal{T}$.
Consider the average sum of angles at every vertex, excluding the contribution of  $P_x$. On one hand, it is equal to $\pi$ since $\mathcal{T}$ is a tiling and $P_x$ contributes an angle equal to $\pi$. On the other hand, every $i$-gon adds an angle $\frac{i-2}{i}\pi$ in average. After summing up contributions of all polygons and dividing by $\pi$ we 
obtain 
\[  \sum\limits_{i\geq 3} \frac{k_i(i-2)}{i}=1.\]
Reformulating this with respect to $a_i$ yields the claim.
\end{proof}

One can easily check that the equation in Lemma \ref{lem:combtype-nonftf} is 
possible only in the following cases (we will list only non-zero $k_i$'s):

\[ n=3: \; k_3=3, \quad \quad n=2: \; k_3=1, k_6=1, \quad \mbox{or} \quad n-2: \; k_4=2. \]

Now we can use the same technique as in \ref{subsec:ftf} for obtaining the full list of all possible monocoronal non face-to-face tilings, see Appendix \ref{class-nonfacetoface}.

\subsection{Proofs of the main theorems}

The full list of tilings in Appendices \ref{class-facetoface} and
\ref{class-nonfacetoface} allows us to 
complete the proofs of Theorems \ref{thm:e2-dir} and \ref{thm:e2-cong}.

\begin{proof}[Proof (of Theorem \ref{thm:e2-dir})]
The first assertion can be checked by going through the list in Appendix 
\ref{class-facetoface}, omitting those tilings that require reflected coronae
(i.e., consider only those figures with (D) in the caption).
We cannot find each group directly depicted in the figures. But
many figures cover an entire class of special cases. For instance, Figure
\ref{pict:6tr-5seg} --- showing a triangle tiling with five different 
edge lengths --- covers also the special cases of four, three, two and
one different edge lengths. In particular, the tiling in Figure
\ref{pict:6tr-5seg} with all edge lengths equal shows the regular 
triangle tiling by equilateral triangles with symmetry group $\ast632$.
In this flavor, the following table shows the number of a figure in which
the corresponding symmetry group occurs, together with some additional 
constraint if necessary.
\begin{table}[!ht]
\begin{tabular}{|l|l|}
\hline
$\ast$632 & \ref{pict:6tr-5seg}, all edge lengths equal\\
$\ast$442 & \ref{pict:4quad-rectangles}, all edge lengths equal\\
$\ast$333 & \ref{pict:2tr-2hex}, all interior angles of the hexagons equal\\
$\ast$2222 & \ref{pict:4quad-rectangles}, all rectangles congruent\\
632 & \ref{pict:4tr-hex}\\
442 & \ref{pict:3tr-2q:non-cons1}\\
333 & \ref{pict:6tr:3seg-3pairs}\\
2222 & \ref{pict:6tr-5seg}\\
4 $\ast$ 2 & \ref{pict:3tr-2q:non-cons2}, with squares and half-squares\\
3 $\ast$ 3 & \ref{pict:6tr:3seg-3pairs}, all triangles isosceles (red=blue)\\
2 $\ast$ 22 & \ref{pict:3tr-2q-per}, with squares and equilateral triangles\\
22 $\ast$ & \ref{pict:6tr-5seg}, with red=violet, green=yellow.\\
\hline
\end{tabular}
\caption{Symmetry groups with the number of their corresponding figures and
further specifications.}
\end{table}
To see that this table is complete, one has to verify that the groups
$22\times$, $\ast \ast$, $\ast \times$, $\times \times$ and $\circ$
are not symmetry groups of the tilings in the list (again only considering
figures with (D) in the caption). The simplest way is to check
that all these tilings do have a centre of 2-fold or 3-fold rotation.
This rules out $\ast \ast$, $\ast \times$, $\times \times$ and $\circ$.
Then one may check which figures show glide reflections, but no reflections:
this is true for none of the figures considered here. 
This rules out $22\times$ and shows all assertions except the last claim. 

It remains to check the figures (again only the ones with (D) in the 
caption) for vertex transitivity.
This is done easily since all tilings considered are crystallographic: 
only the vertices in a fundamental  domain of the translation group have 
to be checked for equivalence with respect to the symmetry group. In most cases an
appropriate rotation will suffice. To give an example, consider
Figure \ref{pict:6tr:3seg-3pairs}: The translations in the symmetry group 
of the depicted tiling act transitively on the small triangles. A
3-fold rotation about the centre of a small triangle permutes the vertices
of this small triangle cyclically. Any vertex in the tiling is the
vertex of some small triangle. Thus a composition of some translation and 
some rotation --- which is again a rotation --- maps any given vertex to 
any different vertex.
\end{proof}

\begin{proof}[Proof (of Theorem \ref{thm:e2-cong})]
The assertion can be checked by going through the full list in Appendix 
\ref{class-facetoface}. In the figures one finds 13 crystallographic groups 
as symmetry groups of the depicted tilings (the 12 groups from the last 
proof together with $22 \times$).  
The following short list gives for each group $G$ occurring
the number of one or two figures in the appendix such that $G$ 
is the symmetry group of the corresponding tiling.
\begin{table}[!ht]
\begin{tabular}{|ccccccccccccc|}
\hline
$\ast$632 & $\ast$442 & $\ast$333 & $\ast$2222 & 632 & 442 & 333 & 
2222 & 4 $\ast$ 2 & 3 $\ast$ 3 & 2 $\ast$ 22 & 22 $\ast$ & 22$\times$\\
\hline
\ref{pict:quad-hex-tw} & 
\ref{pict:quad-2oct-alldiff2} & 
\ref{pict:3hex-alldiffalt} & 
\ref{pict:4quad-rectangles} & 
\ref{pict:4tr-hex}, \ref{pict:tr-2q-hex:equal} & 
\ref{pict:3tr-2q:non-cons1}, \ref{pict:4quad:2sq-2par} & 
\ref{pict:6tr:3seg-3pairs}, \ref{pict:2tr-2hex} & 
\ref{pict:6tr-5seg}, \ref{pict:6tr:4seg-2pairs1} & 
\ref{pict:4quad:sq-rect-2trap}, \ref{pict:quad-2oct-egnonalt} & 
\ref{pict:tr-2q-hex:notequal}, \ref{pict:tr-2tw-nonalt} &
\ref{pict:4quad:2rect-2trap}, \ref{pict:non-f2f-4q-two-rect} & 
\ref{pict:4quad:2trap}, \ref{pict:3hex-alldiffother} & 
\ref{pict:6tr:4seg-2pairs1}, \ref{pict:4quad-singletile reflected}\\
\hline
\end{tabular} 
\caption{Symmetry groups with the number of their corresponding figures}
\end{table}
Note that all symmetry groups can be realized with face-to-face tilings.

The remaining three crystallographic groups $\ast \ast$, $\times \times$ 
and $\circ$ are realised by families of tilings that allow also 
non-crystallographic tilings. This is true for the tilings
depicted in Figures \ref{pict:6tr:4seg-2pairs2}, \ref{pict:3tr-2q:non-per1},
\ref{pict:3tr-2q:non-per2}, \ref{pict:4quad-nonperiodic}, 
\ref{pict:non-f2f-4q-one-rect}, and \ref{pict:non-f2f-3tr-isosceles}.

In the four cases where the tilings are face-to-face, non-crystallographic 
symmetry groups can occur since
the tilings consist of alternating layers such that each second layer
is mirror symmetric. The symmetric layers consist either of isosceles 
triangles ($I$) or of rectangles ($R$), as depicted in Figure \ref{pict:nc-layers}.
\begin{figure}[!ht]
\includegraphics[width=0.7\textwidth]{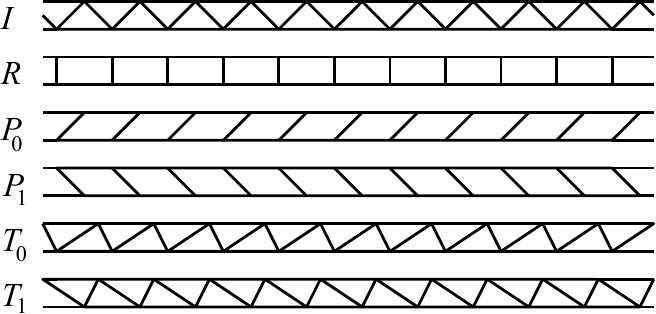}
\caption{Layers of isosceles triangles ($I$), layers of rectangles ($R$),
layers of parallelograms ($P_i$) and layers of non-isosceles 
triangles ($T_i$). \label{pict:nc-layers}}
\end{figure}
The other layers come in two chiral versions and consist either of 
parallelograms ($P_0$ and $P_1$) or of triangles ($T_0$ and $T_1$), see
Figure \ref{pict:nc-layers}. The tilings corresponding to Figure  
\ref{pict:6tr:4seg-2pairs2} consist of alternating layers of the form 
\[\ldots I,T_{i_{-1}},I,T_{i_0}, I,T_{i_1},I,T_{i_2},\ldots, \quad (i_k \in \{0,1\}), \]
the tilings corresponding to Figure  \ref{pict:3tr-2q:non-per1} consist of 
alternating layers of the form  
\[\ldots I,P_{i_{-1}},I,P_{i_0}, I,P_{i_1},I,P_{i_2},\ldots, \quad (i_k \in \{0,1\}), \]
the tilings corresponding to Figure  \ref{pict:3tr-2q:non-per2} consist of 
alternating layers of the form 
\[\ldots R,T_{i_{-1}},R,T_{i_0},I,P_{i_1},I,P_{i_2},\ldots, \quad (i_k \in \{0,1\}), \]
and the tilings corresponding to Figure \ref{pict:4quad-nonperiodic} consist
of alternating layers of the form 
\[\ldots R,P_{i_{-1}},R,P_{i_0},R,P_{i_1},R,P_{i_2},\ldots,  \quad (i_k \in \{0,1\}).\] 
In all cases the sequence $i:=(i_k)_{k \in \Z}$
can be chosen arbitrarily in $\{0,1\}^{\Z}$. The symmetry group
of the corresponding tilings depend on the sequence $i$. In particular,
if $i$ is a non-periodic symbolic sequence, the symmetry group of the
corresponding tiling has translations only parallel to the layers.
(The symbolic sequence $i=(i_k)_{k \in \Z}$ is called {\em non-periodic}, 
if $i_k=i_{k+m}$ for all $k \in \Z$ implies $m=0$.)

By choosing a periodic sequence $i$ appropriately tilings with symmetry 
groups $\ast \ast$, $\times \times$ and $\circ$ can be realised. 
To construct concrete examples we use tilings by parallelograms
and rectangles.
These tilings are depicted in Appendix \ref{no-rotations}. For instance,
$i= \ldots 101100 \; 101100 \; 101100 \ldots$ yields tilings with symmetry
group $\ast \ast$ (no rotations, two reflections in mirror axes parallel
to the layers, see Figure \ref{pict:star-star}). Choosing  
$i= \ldots 111011000100 \;  111011000100 \;  111011000100 \ldots$ 
yields tilings with symmetry group $\times \times$ (no rotations, 
glide reflections with mirror axes orthogonal
to the layers, no reflections, see Figure \ref{pict:norefl-norot}).
With $i= \ldots 0001011 \;  0001011 \;  0001011 \ldots$ 
we obtain tilings with symmetry group $\circ$
(translations only, see Figure \ref{pict:only-transl}).

In order to see that these examples do actually have the claimed
symmetry groups, it is instructive to consider how certain isometries
act on the symbolic sequence $i$. Here $i$ denotes the sequence of the 
original tiling, then $i'$ is the sequence of the tiling after applying
the isometry.
\begin{itemize}
\item Reflection in a line orthogonal to the layers switches the symbols 
1 and 2: $i'_{k} \equiv i_{k} +1 \mod 2$.  
\item Reflection in a line parallel to the layers switches the symbols 
1 and 2 and reverses the order: $i'_{k} \equiv i_{-k} +1 \mod 2$.  
\item Rotation by $\pi$ (around the mid-point of a short edge of some tile 
in layer $0$) reverses the order: $i'_k = i_{-k}$. 
\item Translation orthogonal to the layers by $2m$ layers shifts by $m$:
$i'_{k}=i_{k+m}$. 
\end{itemize} 
In particular, if the sequence $i$ is not invariant under any operation
mentioned above then the corresponding tiling is not invariant under
the corresponding isometry. Keeping this in mind it is easy to check that 
the tilings in Figures \ref{pict:star-star}, \ref{pict:norefl-norot} and 
\ref{pict:only-transl} have the claimed symmetry groups.

It remains to rule out tilings with symmetry groups $\ast \times$ in
the cases considered here, i.e., in the six families of tilings that
may have non-crystallographic symmetry groups. Since these tilings
consist of the layers discussed above, any possible reflection mapping
the tiling to itself has to map the layers to themselves. Thus its mirror 
axis is either parallel to the layers, or orthogonal to the layers. A 
reflection orthogonal to the layers maps a layer $P_1$ to a layer $P_2$,
resp. a layer $T_1$ to a layer $T_2$, thus it cannot be a symmetry
of the tiling. Hence any possible reflection must have a mirror axis
parallel to the layers. 

Since the reflection maps entire layers to entire layers, the mirror
axis lies either on the boundary of some layer, or in the central axis
of some layer. In the first case, the reflection would switch a 
symmetric layer ($I$ or $R$) with a non-symmetric layer ($P_i$ or $T_i$),
hence it is not a symmetry of the tiling. In the second case, it cannot
be the central axis of a layer $P_i$, or $T_i$, or $I$, since these are not
mirror symmetric with respect to reflection about their central axis. Thus the
mirror axis of a possible reflection is the central axis of a layer
$R$.

From the list of the 17 wallpaper groups we
obtain that the mirror axis of a possible glide reflection is
parallel to the axis of the mirror reflection in the symmetry group
$\times \ast$. Hence the axis of any glide reflection is parallel to the layers.
By the same reasoning as above, it must be the central axis of
a layer $R$ consisting of rectangles. It remains to show that there 
is no ``original'' such glide reflection, in terms of group generators. 
I.e., we have to show that any such glide reflection is a combination
of a reflection $r$ and a translation $t$ that are already symmetries
of the tiling. 

The translational part of any glide reflection as above maps rectangles
to rectangles. Thus its length is an integer multiple of the length of one edge
of the rectangles in layer $R$. But a translation by just one edge length
of the rectangles already maps the tiling to itself. Thus for any possible
glide reflection $g$ which translational part shifts by $k$ rectangles,
there is a translation $t$ --- shifting by $-k$ rectangles --- such that
$t \circ g$ is a proper reflection that maps the tiling to itself. 

Hence a group $\ast \times$ cannot occur as symmetry group of a 
monocoronal tiling. 

Now we will establish all possible symmetry groups of 
1-periodic monocoronal tilings. First we will give examples as sequences 
of layers of rectangles and parallelograms as before (sequences of 0's 
and 1's). For the group $\infty\infty$ we can use the sequence $\ldots 0001101000\ldots$ 
with only three 1's. For the group $\infty\ast$ we can use the sequence 
$\ldots 00001111 \ldots$ which is infinity with 0's in one direction and 1's in the other.
And for the group $22\infty$ we can use the sequence $\ldots000010000\ldots$ with only one 1.

The group $\infty\times$ can occur as a symmetry group 
of tiling with layers of isosceles triangles and parallelograms with 
sequence $\ldots 00001111 \ldots$ which is infinity with 0's in one 
direction and 1's in the other. Here the axis of the glide reflection 
is in the middle line of layers of triangles between 0 and 1.

All other three frieze groups $\ast\infty\infty$, 
$2\ast\infty$, and $\ast 22\infty$ can not be realized as symmetry group 
of $1$-periodic monocoronal tiling because they contain a mirror symmetry 
in the direction orthogonal to layers which is impossible for both types 
of non-symmetric layers.
\end{proof}

\section{Euclidean space of dimension 3 and higher} \label{sec:highdim}

In the last section we have seen that monocoronal tilings up
to rigid motions are always 2-periodic, whereas monocoronal tilings 
in general are either 1-periodic or 2-periodic.
Thus one may ask for the possible dimensions of the translation group 
of a monocoronal tiling in higher dimension $d \ge 3$. 
Trivially, the maximal dimension is always $d$, realised for instance
by the canonical face-to-face tiling of $\R^d$ by unit hypercubes.
Thus this section gives upper bounds for the minimal possible dimension 
of translation groups of monocoronal tilings in $\R^d$ for $d \ge 3$,
distinguishing the cases of face-to-face (Theorem \ref{thm:f2f-minper}) vs 
non face-to-face (Theorem \ref{thm:nonf2f-minper}). Both case split
further with respect to direct congruence vs congruence. In analogy to the plane 
case, two sets $A,B \subset \R^d$ are called {\em congruent}, 
if there is $t \in \R^d$ such that $A = t+RB$ for some $R \in O(d)$.
$A$ and $B$ are called {\em directly congruent}, 
if there is $t \in \R^d$ such that $A = t+RB$ for some $R \in SO(d)$.
The latter corresponds to the existence of a rigid motion of $\R^d$ moving $A$ to
$B$.

\begin{thm}\label{thm:f2f-minper}
There are face-to-face tilings of $\R^d$ that are $\lceil \frac{d}{2}\rceil$-periodic
and monocoronal.\\
There are face-to-face tilings of $\R^d$ that are $\lceil \frac{d+1}{2}\rceil$-periodic 
and monocoronal up to rigid motions. 
\end{thm}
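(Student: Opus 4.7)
The plan is to construct all required tilings as Cartesian products of two elementary monocoronal building blocks: the $1$-periodic planar face-to-face monocoronal tiling $\mathcal{T}^{*}$ guaranteed by Theorem \ref{thm:e2-cong}, and the canonical tiling $\mathcal{I}$ of $\R$ by unit intervals. I would rely throughout on three routine product facts: (i) the Cartesian product of face-to-face tilings is again face-to-face; (ii) vertices of the product are products of vertices of the factors, and the vertex-corona of a product vertex is the Cartesian product of the vertex-coronae of the factor-vertices; (iii) a translation of a product tiling is a symmetry iff each of its components is a translational symmetry of the corresponding factor, so the translation group of the product has rank equal to the sum of the ranks of the factor translation groups.

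For the first statement (monocoronal up to congruence, $\lceil d/2\rceil$-periodic) I would take
\[
\mathcal{T}_d \;=\; (\mathcal{T}^{*})^{\lfloor d/2\rfloor}\times \mathcal{I}^{\,d-2\lfloor d/2\rfloor},
\]
i.e.\ $\lfloor d/2\rfloor$ copies of $\mathcal{T}^{*}$, with one extra $\mathcal{I}$-factor when $d$ is odd. By (i)--(iii) this tiling is face-to-face, has translation group of rank exactly $\lceil d/2\rceil$, and any two of its vertex-coronae are congruent via the product of the factor-wise congruences supplied by monocoronality of each factor.

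For the second statement (monocoronal up to rigid motion, $\lceil (d+1)/2\rceil$-periodic) I would take
\[
\mathcal{T}'_d \;=\; (\mathcal{T}^{*})^{a}\times \mathcal{I}^{b},\qquad 2a+b=d,\ \ a+b=\lceil (d+1)/2\rceil,
\]
so that $b=1$ when $d$ is odd and $b=2$ when $d$ is even. Face-to-faceness, exact periodicity, and monocoronality up to congruence again follow from (i)--(iii). The main obstacle, and the only genuinely nontrivial step, is promoting the product congruence to an orientation-preserving isometry: because $\mathcal{T}^{*}$ is monocoronal only up to congruence, the factor-isometry in each $\mathcal{T}^{*}$-slot may be orientation-reversing, and so the product isometry may a priori land in $O(d)\setminus SO(d)$. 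The observation that resolves this is that the vertex-corona of any integer point of $\mathcal{I}$ is invariant under the reflection about that point, so the $\mathcal{I}$-factor isometry sending one such corona to another can be chosen with determinant either $+1$ or $-1$. Since $\mathcal{T}'_d$ always contains at least one $\mathcal{I}$-factor, this single degree of sign-freedom suffices to cancel the combined sign contributed by the $\mathcal{T}^{*}$-factors, producing an orientation-preserving product isometry and completing the argument.
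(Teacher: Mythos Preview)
Your proof is correct and follows essentially the same approach as the paper's: both take Cartesian products of copies of a $1$-periodic planar monocoronal tiling together with unit-interval factors, and both exploit the mirror symmetry of the interval tiling's vertex corona (your ``sign-freedom in the $\mathcal{I}$-factor'') to upgrade congruence to direct congruence in the second claim. The only cosmetic difference is that the paper first packages one planar factor and one interval factor into a $3$-dimensional seed tiling with mirror-symmetric coronae before taking further products, whereas you assemble the full product $(\mathcal{T}^{*})^{a}\times\mathcal{I}^{b}$ directly and argue about the determinant of the product isometry.
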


\begin{proof}
For the first claim, 
consider a 1-periodic monocoronal tiling $\mathcal{T}$ of the Euclidean plane  
(e.g. Figure \ref{pict:3dim-block}. Consider the direct 
product $\mathcal{T}\times\ldots
\times \mathcal{T}$ of $\lfloor \frac{d}{2}\rfloor$ copies of $\mathcal{T}$. 
This yields a $\lfloor \frac{d}{2}\rfloor$-periodic monocoronal tiling. 
If $d$ is odd then we need to take 
one additional direct product with some 1-periodic tiling of $\R^1$ (for example 
a tiling of the line by unit intervals).

For the second claim, note that the Cartesian product $A \times B$ is directly
congruent if either $A$ or $B$ is directly congruent. 
Let $\T$ be the direct product of a 1-periodic tiling 
from Figure \ref{pict:4quad-nonperiodic} with a tiling of $\R^1$ by unit 
intervals. Then $\T$ is a monocoronal tiling up to rigid motions 
of $\R^3$. The product of $\T$ with $k$ copies of any 1-periodic monocoronal
tiling of $\R^2$ is a monocoronal tiling up to 
rigid motions of $\R^{3+2k}$ that is $2+k$-periodic.
If $d$ is odd, then $d=3+2k$ for some $k$, and $\T$ is $\lceil \frac{d+1}{2} \rceil$-periodic.
If $d$ is even then consider the additional direct product
with a further tiling of $\R^1$ by unit intervals.
This yields a $\lceil \frac{d+1}{2}\rceil$-periodic monocoronal
tiling up to rigid motions of $\R^d$ for all $d \ge 3$.
\end{proof}

For the non face-to-face case one can even push it further: one can construct 
non-periodic tilings in $\mathbb{R}^d$ for $d \ge 3$.

\begin{thm}\label{thm:nonf2f-minper}
For any $d\geq 3$ there are non-periodic non face-to-face tilings of $\R^d$ 
that are monocoronal.\\
For any $d\geq 4$ there are non-periodic non face-to-face tilings of $\R^d$
that are monocoronal up to rigid motions .\\
\end{thm}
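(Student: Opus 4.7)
My strategy is to generalize the 1-periodic layered construction used in the proof of Theorem \ref{thm:e2-cong}: the 1-periodic non face-to-face monocoronal tilings of $\R^2$ in Appendix \ref{class-nonfacetoface} (such as those in Figures \ref{pict:non-f2f-4q-one-rect} and \ref{pict:non-f2f-3tr-isosceles}) consist of layers whose stacking is governed by a non-periodic binary sequence, and monocoronality holds regardless of the particular sequence chosen. In dimensions $d \ge 3$, the non face-to-face condition gives enough geometric flexibility to replace this one-dimensional sequence by a multi-dimensional array of \emph{independent} local choices, thereby breaking translational symmetries in several additional directions at once.

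For the first claim, I would start from a 1-periodic non face-to-face monocoronal tiling of $\R^2$ governed by a non-periodic binary sequence $(i_k)_{k\in\Z}$ (as in Figure \ref{pict:non-f2f-4q-one-rect}) and stack copies of it along the remaining $d-2$ directions as unit-height prisms, allowing each copy to carry its own independently chosen binary sequence $(i^{(m)}_k)_{k\in\Z}$, indexed by a $(d-2)$-dimensional stacking parameter $m$. Because adjacent copies meet only along horizontal faces whose combinatorics need not match, these independent choices are geometrically admissible. Selecting the multi-dimensional array $(i^{(m)}_k)$ to be non-periodic in every variable destroys all translational symmetries: within each 2D slice the non-periodicity of $(i^{(m)}_k)_k$ kills the original 2D translation direction, while across slices the non-periodicity in $m$ kills translations perpendicular to the slices. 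This gives a non-periodic monocoronal non face-to-face tiling of $\R^d$ for every $d\ge 3$.

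For the second claim ($d \ge 4$, directly congruent case), the same strategy applies, but one needs a directly-congruent analogue of the base construction. Theorem \ref{thm:e2-dir} rules out a 1-periodic 2D base tiling monocoronal up to rigid motions, which is precisely why the dimension bound is $d \ge 4$ rather than $d \ge 3$: the extra dimension is used to replace the reflection implicit in the 2D layered construction by a rotation in $\R^3$. Concretely, starting from a 1-periodic non face-to-face 2D monocoronal tiling and adding a third coordinate with a suitable rotation between layers produces a 1-periodic non face-to-face tiling of $\R^3$ that is monocoronal up to rigid motions; applying the same independent-choice stacking construction to this base in the remaining directions yields a non-periodic such tiling of $\R^d$ for every $d \ge 4$.

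The main obstacle is verifying monocoronality of the multi-dimensional independent-choice construction, i.e.\ showing that the vertex corona at every vertex depends only on the local configuration of binary choices in a bounded neighbourhood and is independent of the global pattern. This amounts to a combinatorial case analysis extending the one used in the 2D layered setting; the delicate cases are the ``transition'' vertices where neighbouring cells or layers make different binary choices, and one must check that in each such case the resulting corona is congruent (respectively directly congruent) to the prescribed one. I expect this verification to be the technical heart of the argument, with the rest following by choosing the multi-dimensional array of choices to be non-periodic.
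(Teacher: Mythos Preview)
Your proposal has two substantial gaps, and in fact the construction as you describe it does not produce a non-periodic tiling.

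First, stacking unit-height prisms over a $1$-periodic planar base tiling cannot kill the base period. Each $2$-dimensional layer you use is periodic along the row direction (say the $x$-axis) with a fixed period $p$, and this period is \emph{independent} of the binary sequence $(i_k)_k$; the sequence governs only the orthogonal (non-periodic) direction. Hence every prism layer, regardless of its sequence $(i^{(m)}_k)_k$, is invariant under translation by $(p,0,0,\ldots,0)$, and so is the whole stack. Your sentence ``the non-periodicity of $(i^{(m)}_k)_k$ kills the original 2D translation direction'' has it backwards: that non-periodicity is already built into each slice and affects only the direction that was never periodic to begin with. The result is at best $1$-periodic.

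Second, the obstacle you identify --- verifying monocoronality when adjacent prism layers carry different sequences --- is not merely technical; it actually fails for the direct stacking you propose. At an interface between two prism layers the vertex corona is the union of a corona from the layer below and a corona from the layer above. When the two layers carry different binary data, these half-coronae combine in several inequivalent ways (for instance, the four pairs $(0,0),(0,1),(1,0),(1,1)$ of local choices generically yield coronae that are not all congruent), so the tiling is not monocoronal.

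The paper's construction solves both problems simultaneously with two ideas you are missing. It interleaves the prismatic layers with ``neutral'' layers of unit cubes whose vertex set is the full integer lattice; this decouples consecutive prismatic layers, since every vertex now sees exactly four cubes on one side and a fixed prismatic corona on the other, independently of what happens in the next prismatic layer. And instead of varying an internal binary sequence, it \emph{rotates} some prismatic layers by $\pi/2$: a rotated copy has its sole period direction orthogonal to that of an unrotated copy, so using both orientations kills all horizontal translations, while a non-periodic sequence of orientations kills the vertical one. The passage to $d\ge 4$ for the rigid-motion case is then achieved by first taking a product with a line to make the base corona mirror-symmetric, which is close in spirit to what you suggest.
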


\begin{proof}
We will start with the first claim. We will show the construction of such an 
example in $\R^3$. It can easily be generalised to higher dimensions.

We start with a 1-periodic tiling from Figure \ref{pict:3tr-2q:non-per2} 
where all rectangles are unit squares and all triangles are isosceles 
right triangles with edges $1,1,\sqrt2$ (see Figure \ref{pict:3dim-block}).
\begin{figure}[!ht]
\begin{center}
\includegraphics[width=0.7\textwidth]{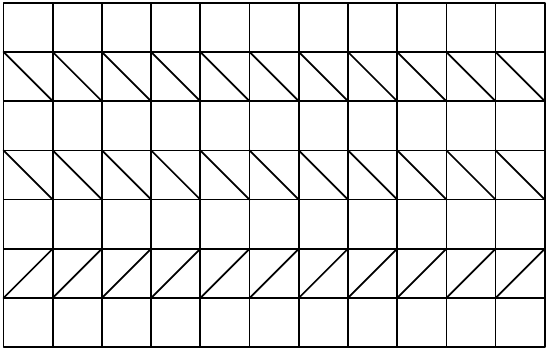}
\caption{Building block for an non-periodic tiling.}
\label{pict:3dim-block}
\end{center}
\end{figure}
Using this tiling we can create a {\it layer} $L$ by taking its direct 
product with an orthogonal unit interval (see Figure \ref{pict:3dim-1layer}).
This layer is 1-periodic.
\begin{figure}[!ht]
\begin{center}
\includegraphics[width=0.7\textwidth]{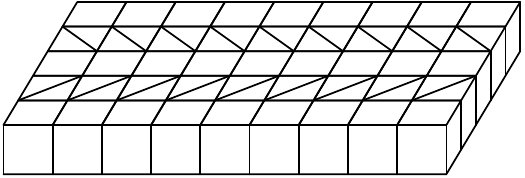}
\caption{Layer of unit cubes and triangular prisms.}
\label{pict:3dim-1layer}
\end{center}
\end{figure}
In the next step we add a layer of unit cubes below the layer $L$ (see Figure 
\ref{pict:3dim-2layers}). This is the point where our tiling starts to be 
non face-to-face.
\begin{figure}[!ht]
\begin{center}
\includegraphics[width=0.7\textwidth]{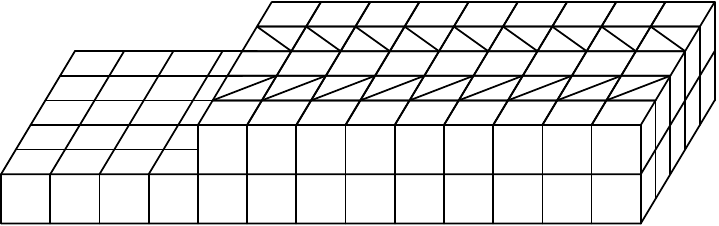}
\caption{Layer $L$ and layer of unit cubes.}
\label{pict:3dim-2layers}
\end{center}
\end{figure}
In the next step (see Figure \ref{pict:3dim-3layers}) we add one more layer 
congruent to $L$ below that is rotated by $\frac{\pi}{2}$ with respect to $L$. This latter
layer is 1-periodic, but in a direction orthogonal to the period of
the first layer. Hence the union of all three layers is non-periodic.
\begin{figure}[!ht]
\begin{center}
\includegraphics[width=0.8\textwidth]{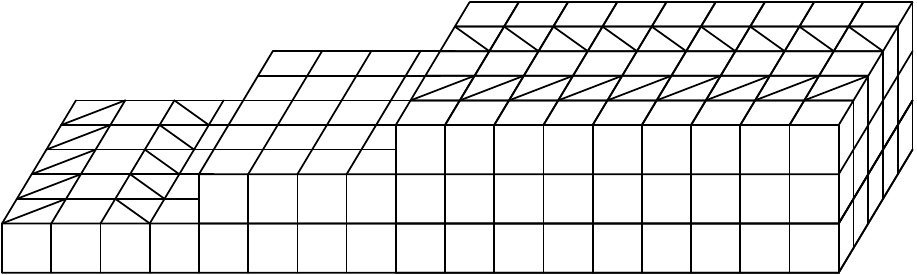}
\caption{Three layers of unit cubes, two of them contains cubes cut into prisms.}
\label{pict:3dim-3layers}
\end{center}
\end{figure}
Proceeding in this manner yields a tiling of $\R^3$ such that even layers 
consist of unit cubes, odd layers consist of unit cubes and triangular 
prisms. It is clear that any vertex corona consists of four unit cubes 
from some even layer, two unit cubes from an adjacent odd layer, and 
three prisms. Hence the tiling is a monocoronal tiling.

In each step of the construction one may choose a layer that is a 
translate of $L$, or a copy of $L$ rotated by $\frac{\pi}{2}$. 
If the sequence of choices is non-periodic then the tiling has no
period in the vertical direction. Hence we obtain a non-periodic tiling.

The constructed tiling is not a monocoronal tiling up to 
rigid motions, so for the second claim of the theorem we need to modify the 
construction a little bit. As for the first claim, we will present a construction 
for $\R^4$ that can be easily generalised to higher dimensions.

Again, we start from the tiling by unit squares and right isosceles triangles 
in Figure \ref{pict:3dim-block}. In a preliminary step we take the 
direct product of this tiling with a tiling of the orthogonal line by unit intervals.
This yields a $2$-periodic tiling of $\R^3$ where each vertex corona consists
of unit cubes and triangular prisms, and each vertex corona is mirror-symmetric.

Taking the direct product of this tiling with an orthogonal unit interval yields  
a layer $L'$ in $\R^4$ consisting of unit $4$-cubes and prisms over triangular 
prisms. Now we can repeat the steps from the proof of the first claim of Theorem
\ref{thm:f2f-minper}. 
Every even layer of the tiling consists of unit $4$-cubes, and every odd layer is
a directly congruent copy of the layer $L'$, preserving the ``almost'' cubical 
structure on its boundary.

The initial 3-dimensional tiling was monocoronal, with mirror-symmetric corona.
Thus the resulting tiling is also monocoronal with mirror-symmetric corona, 
hence it is a monocoronal tiling up to rigid motion. We can force this 
tiling to be non-periodic by taking some non-periodic sequence of 
rotations of $L'$ for odd layers. 
$L'$ is $2$-periodic so in every step we can choose one direction in which 
the copy of $L'$ is non-periodic. It suffices to choose each of three 
directions at least once to destroy any period parallel to the layers.
\end{proof}

\section{Conclusion} \label{sec:concl}

This short conclusion mentions some open problems in the context of this  
paper that are still open and may suggest further work. 

The smallest possible 
dimension of the translation group of monocoronal face-to-face tilings of 
Euclidean spaces of dimension at least $3$ is still unknown: it is 
0, 1 or 2. In particular, the question ``Does there exist a non-periodic 
monocoronal face-to-face tiling in $\R^3$'' is still open.

Throughout this paper we restrict our study only to convex tiles, but the 
same question could be asked for tilings with non-convex polygons 
(resp.\ polytopes) as well. Allowing non-convex polygons does not change the 
classification of two-dimensional face-to-face monocoronal tiling in 
Appendix~\ref{class-facetoface}: every possible monocoronal tiling
using non-convex polygons is already covered by the classification. 
For non face-to-face tiling it is even easier, because in this case 
no angle of any polygon can be greater than $\pi$, hence no monocoronal
non-face-to-face tiling with non-convex polygons is possible at all.

One may also ask for classifications of monocoronal tilings in other
spaces of constant curvature, in particular in hyperbolic space.
For instance we might ask:
Is it true that every monocoronal tiling of $\mathbb{H}^d$ is crystallographic?
It is easier to answer this question for hyperbolic spaces than for Euclidean 
spaces since there is a family of non-crystallographic tilings of $\mathbb{H}^d$ 
with unique tile corona. This tiling can be used to construct tilings with 
unique vertex corona, that is, monocoronal tilings.

\begin{thm} \label{thm:hyp}
There is a non-crystallographic face-to-face tiling of $\mathbb{H}^d$ that
is monocoronal up to congruence.
\end{thm}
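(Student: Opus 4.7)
The plan is to adapt the classical B\"or\"oczky construction of non-crystallographic monohedral tilings of $\HH^d$. Work in the upper half-space model, fix the ideal point $\omega=\infty$, and foliate $\HH^d$ by horospheres $H_k=\{x_d=2^k\}$ for $k\in\Z$, all centred at $\omega$. The hyperbolic dilation $p\mapsto 2p$ is an isometry sending $H_k$ to $H_{k+1}$. In each slab $S_k$ between $H_k$ and $H_{k+1}$ I tile by congruent convex cells: a B\"or\"oczky-type polytope with vertical geodesic lateral faces and pieces of $H_k$, $H_{k+1}$ on bottom and top. The exponential growth of horospheres forces the combinatorial two-to-one relationship that each cell of $S_{k+1}$ sits over $2^{d-1}$ cells of $S_k$. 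To achieve a face-to-face tiling, the bottom of each $S_{k+1}$-cell is subdivided into $2^{d-1}$ sub-faces matching the tops of the cells below. After a suitable refinement (adding further horospheric sub-faces so that every vertex of the tiling is of the same combinatorial type), the tiling is monocoronal.

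At each level $k$ there is combinatorial freedom in how to align the cells of $S_{k+1}$ over those of $S_k$; I encode this freedom by a symbol $i_k$ in a finite alphabet. Every sequence $i=(i_k)_{k\in\Z}$ determines a face-to-face tiling $\T_i$, and by the uniformity of the construction every vertex of $\T_i$ has a vertex corona congruent to every other (possibly after a reflection, since the branching to the two slabs may happen on either side of the vertex). Hence $\T_i$ is monocoronal up to congruence.

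To make $\T_i$ non-crystallographic, choose $i$ non-periodic, e.g.\ a Sturmian sequence in $\{0,1\}^\Z$. Any isometry of $\T_i$ must preserve the ideal point $\omega$, which is metrically distinguished because cell diameters blow up towards $\omega$; hence it permutes the horospheres $\{H_k\}$ by a shift. A shift by $m\ne 0$ induces the same shift on the sequence $i$, contradicting non-periodicity. Thus every symmetry of $\T_i$ fixes each horosphere setwise and acts on it by a Euclidean isometry of the induced cubical tiling of $H_k$; a fundamental domain of the full symmetry group contains an entire ``vertical column'' of cells and has infinite hyperbolic diameter. So $\T_i$ is non-crystallographic.

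The main obstacle is designing the cell so that, simultaneously, (a) stacking is face-to-face under every admissible choice sequence $i$, (b) all vertex coronae are genuinely congruent up to reflection. In the unmodified B\"or\"oczky construction the vertices split into two types (``corner'' vertices, where cells of the two slabs meet corner-to-corner, versus ``mid-face'' vertices, where the branching point of one cell meets the corner of another), so additional horospheric cuts must be introduced uniformly throughout the tiling to make every vertex look combinatorially the same. The monocoronal requirement pins down this refinement essentially uniquely up to free metric parameters, and one must verify that the refined cell is still convex and glues correctly regardless of the horospheric shift $i_k$ chosen at each level.
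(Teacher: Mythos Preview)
Your plan diverges from the paper's at the key step of forcing the B\"or\"oczky construction to be monocoronal, and that step is where your argument has a real gap. You correctly observe that the raw B\"or\"oczky tiling has vertices of more than one combinatorial type and propose to repair this by ``additional horospheric cuts'', but you do not actually carry out the refinement, and it is not clear it can be done cleanly: each new cut creates new vertices of potentially new types, convexity of the pieces may be lost, and you yourself flag the verification as outstanding. As written, the proposal asserts the existence of a suitable refinement without constructing one.

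The paper avoids this difficulty entirely by \emph{dualizing}. The B\"or\"oczky tiling $\mathcal{B}$ is not monocoronal, but all of its first \emph{tile} coronae are congruent (every tile is surrounded by its neighbours in the same way). Passing to the dual tiling --- new vertices at barycentres of old tiles, new tiles the convex hulls of the barycentres of the old tiles incident to a given old vertex --- converts congruent tile coronae into congruent vertex coronae, so the dual is automatically monocoronal. The horocyclic layer structure (barycentres of one layer lie on a common horocycle) guarantees the dual is again a genuine face-to-face tiling. Since $\mathcal{B}$ and its dual have the same symmetry group, the dual is non-crystallographic simply because $\mathcal{B}$ is; no choice sequence or Sturmian argument is needed.

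A smaller point: your claim that symmetries must fix $\omega$ because ``cell diameters blow up towards $\omega$'' is incorrect --- all cells are mutually congruent and hence have equal hyperbolic diameter. The distinguished role of $\omega$ has to be argued from the horospheric layer structure of the tiling, not from cell size.
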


\begin{proof}
Here we show the construction for $\mathbb{H}^2$. An analogous construction 
works for arbitrary dimension.

We start from (one of) B\"or\"oczky tiling $\mathcal{B}$ \cite{Bor}. It is a 
non-crystallographic tiling of hyperbolic plane \cite[Theorem. 4.4]{DF} by equal 
pentagons. Figure \ref{pict:hyper-1} shows a schematic view of this 
tiling as a tiling of the representation of $\mathbb{H}^2$ as lower half plane.
\begin{figure}[!ht]
\begin{center}
\includegraphics[width=0.7\textwidth]{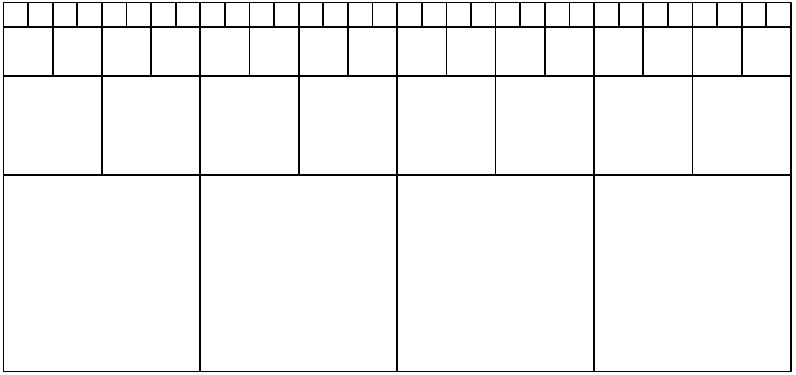}
\caption{Example of B\"or\"oczky tiling.}
\label{pict:hyper-1}
\end{center}
\end{figure}

It is easy to see that this tiling $\mathcal{B}$ is not monocoronal. But 
every tile is surrounded by other tiles ``in the same way'', 
so we can use it to construct a monocoronal tiling. We construct the 
{\it dual tiling} by taking barycenters of the initial tiles as vertices 
of a new tiling and new tiles are convex hulls of vertices corresponding 
to ``old'' tiles incident to one ``old'' vertex.
\begin{figure}[!ht]
\begin{center}
\includegraphics[width=0.7\textwidth]{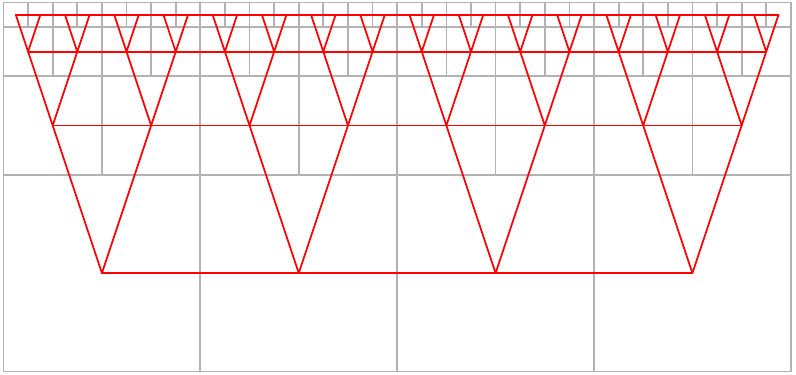}
\caption{Dual to B\"or\"oczky tiling.}
\label{pict:hyper-2}
\end{center}
\end{figure}

For arbitrary tilings this construction does not necessary yield a (face-to-face) 
tiling, but in the case of B\"or\"oczky tiling it works. Barycenters 
of tiles of one ``horizontal layer'' lie on one horocycle (horosphere 
in $\mathbb{H}^d$). Thus tiles of the dual tiling form a layer 
structure between neighboring horocycles. Moreover, this tiling 
is a monocoronal tiling, since in the original tiling all first tile-coronae
are congruent. The dual tiling is non-crystallographic, since the initial 
B\"or\"oczky tiling was non-crystallographic.
\end{proof}

Theorem \ref{thm:hyp} shows that face-to-face tilings in $\HH^d$ that are 
monocoronal up to congruence can be non-crystallographic (in a pretty 
strict sense: their symmetry group being finite, see \cite{DF}). They can 
also be crystallographic for small $d$, any regular tiling of $\HH^d$
yields an example.

The same question with respect to monocoronal tilings up to rigid motions 
is still open: The vertex coronae of the tilings in Figure \ref{pict:hyper-2}
are congruent, but not directly congruent. It might also be interesting to
study the situation in the higher dimensional analogues of the B\"or\"oczky
tilings. 

\section*{Acknowledgements}
The research of the second author is partially supported by a grant of the 
Dynasty Foundation and by the grant ``Leading Scientific Schools'' NSh-4833.2014.1.

\newpage

\appendix

\section{Two-dimensional face-to-face monocoronal tiling}\label{class-facetoface}

In this section we will list all possible monocoronal tilings of the 
Euclidean plane. The tilings are grouped with respect to
(a) the number of polygons incident to the central vertex and (b) the type 
of this polygons. The caption of any figure will contain (NC) if
the depicted tilings may have a non-crystallographic symmetry group, i.e.,
whether there is a 1-periodic tiling in the class of depicted tilings.
The caption will contain (D) if all vertex coronae are directly congruent.
Note, that if the vertex corona is mirror symmetric, then all vertex corona
are directly congruent.

\subsection{Six triangles}

We will divide this case further into subcases with respect to the number 
of different edge lengths occurring. It is easy to see that the 
case with six different edge lengths incident to each vertex is impossible.

\subsubsection{Five edge lengths}

There is only one possible case of that type.
\begin{center}
\includegraphics[width=0.6\textwidth]{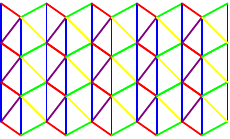}
\captionof{figure}{Tiling with two different non-isosceles non-reflected 
triangles (D).}\label{pict:6tr-5seg}
\end{center}

\subsubsection{Four edge lengths}

First we will list all possible tilings with two pairs of edges of equal length. 
Here and before we will omit families that can appear as limiting cases of 
the previous ones. For example here we will not list the case where 
the yellow edge and the red edge from Figure \ref{pict:6tr-5seg} have
the same length.

\begin{minipage}{0.45\textwidth}
\begin{center}
\includegraphics[width=\textwidth]{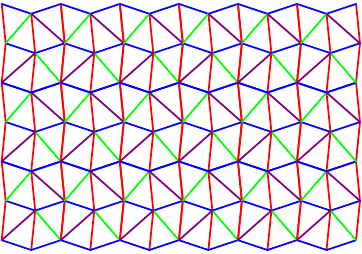}
\captionof{figure}{Tiling with two different types of non-isosceles 
triangles.}\label{pict:6tr:4seg-2pairs1}
\end{center}
\end{minipage}
\hfill
\begin{minipage}{0.45\textwidth}
\begin{center}
\includegraphics[width=\textwidth]{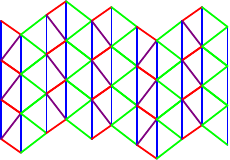}
\captionof{figure}{Tiling with two types of triangles: one is non-isosceles 
and one is isosceles (NC).}\label{pict:6tr:4seg-2pairs2}
\end{center}
\end{minipage}

The only family of this type with three edges of equal length incident to each 
vertex can be obtained from Figure \ref{pict:6tr-5seg} by forcing the 
yellow edge and the blue edge to have the same length.

\newpage

\subsubsection{Three edge lengths}

This part we start from tilings where each vertex is incident to three pairs 
of equal edges.

\begin{center}
\includegraphics[width=0.7\textwidth]{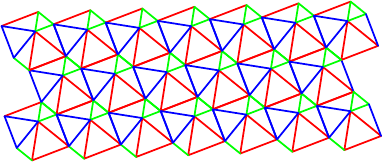}
\captionof{figure}{Tiling with three different regular triangles and one 
type of non-isosceles triangle (D).}\label{pict:6tr:3seg-3pairs}
\end{center}

The second option is the following: each vertex is incident to three edges 
of the first type, two edges of the second type and one edge of the third 
type. But this case is covered by families that we listed before.

The third case where each vertex is incident to four equal edges is also 
covered by previous cases.

\subsubsection{Two edge lengths}

This is also covered by previous families.

\subsubsection{One edge length}

This case is trivial since there is only such tiling, namely the 
canonical tiling by regular triangles.

\subsection{Five polygons: three triangles and two quadrilaterals}

First we will list all possible tilings where quadrilaterals are consecutive 
in every vertex-corona.

\begin{center}
\includegraphics[width=0.7\textwidth]{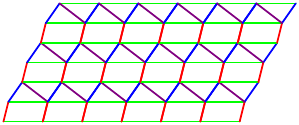}
\captionof{figure}{Tiling with two equal parallelograms and three equal 
triangles (D).}\label{pict:3tr-2q-per}
\end{center}

Two different limiting cases of this corona can generate non-crystallographic 
tilings, see Figure \ref{pict:3dim-block} for one example. 
However, if we combine two limiting cases then we will obtain again a 
crystallographic tiling with rectangles and isosceles triangles.

\newpage

\begin{minipage}{0.45\textwidth}
\begin{center}
\includegraphics[width=\textwidth]{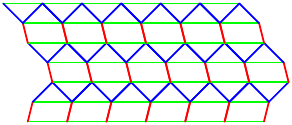}
\captionof{figure}{Tiling with two equal non-rectangular parallelograms 
and with three equal isosceles triangles (NC).}\label{pict:3tr-2q:non-per1}
\end{center}
\end{minipage}
\hfill
\begin{minipage}{0.45\textwidth}
\begin{center}
\includegraphics[width=\textwidth]{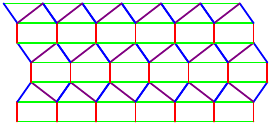}
\captionof{figure}{Tiling with two equal rectangles and three equal 
non-isosceles triangles (NC).}\label{pict:3tr-2q:non-per2}
\end{center}
\end{minipage}

The second family consists of tilings whose coronas contain non-consecutive 
quadrilaterals. There are two different families of such tilings.

\begin{minipage}{0.45\textwidth}
\begin{center}
\includegraphics[width=\textwidth]{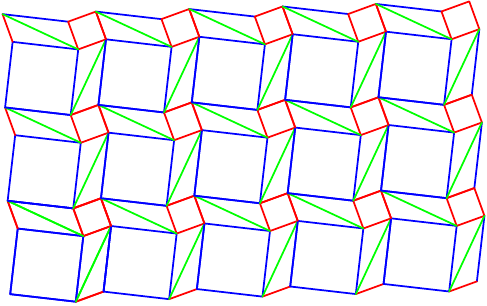}
\captionof{figure}{Tiling with two different squares and with three 
equal triangles (D).}\label{pict:3tr-2q:non-cons1}
\end{center}
\end{minipage}
\hfill
\begin{minipage}{0.45\textwidth}
\begin{center}
\includegraphics[width=\textwidth]{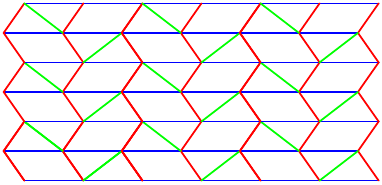}
\captionof{figure}{Tiling with two equal (non-consecutive) parallelograms 
and three equal triangles.}\label{pict:3tr-2q:non-cons2}
\end{center}
\end{minipage}

\subsection{Five polygons: four triangles and one hexagon}

There is only one family of such tilings. The vertex corona consists of
a regular hexagon, one regular triangle and one arbitrary triangle.

\begin{center}
\includegraphics[width=0.8\textwidth]{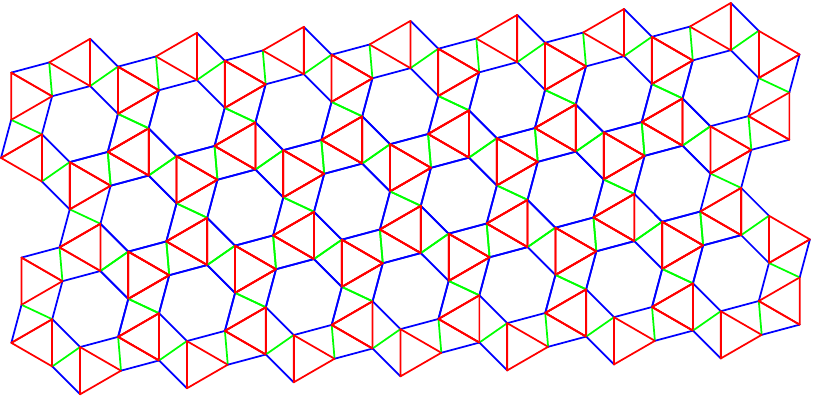}
\captionof{figure}{Tiling with regular hexagon, regular triangle and an 
arbitrary triangle (D).}\label{pict:4tr-hex}
\end{center}

\newpage

\subsection{Four polygons: two triangles and two hexagons}

There is only one family of such tilings. Its vertex corona consists of 
two regular triangles (of different size) and one hexagon with alternating 
edges and angles.

\begin{center}
\includegraphics[width=0.8\textwidth]{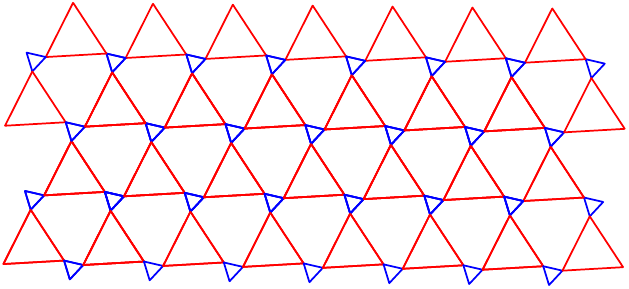}
\captionof{figure}{Tiling with two types of regular triangles and equal 
hexagons (D).}\label{pict:2tr-2hex}
\end{center}

\subsection{Four polygons: triangle, two quadrilaterals and one hexagon}

There are two families of this type. This was shown in Section \ref{sec:plane}. 
The first family has a vertex corona consisting of a hexagon with two different 
edge lengths and the second one consists of hexagons with unique edge length.

\begin{minipage}{0.45\textwidth}
\begin{center}
\includegraphics[width=\textwidth]{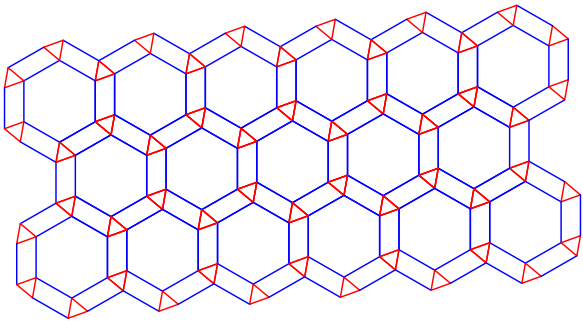}
\captionof{figure}{Tiling with regular triangle, regular hexagon and two 
parallelograms (D).}\label{pict:tr-2q-hex:equal}
\end{center}
\end{minipage}
\hfill
\begin{minipage}{0.45\textwidth}
\begin{center}
\includegraphics[width=\textwidth]{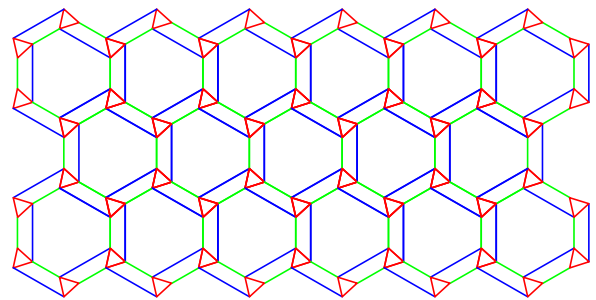}
\captionof{figure}{Tiling with regular triangle, hexagon and two equal 
trapezoids.}\label{pict:tr-2q-hex:notequal}
\end{center}
\end{minipage}

\subsection{Four polygons: four quadrilaterals}

In the first part all edges incident to one vertex will be of pairwise
different length.

\begin{minipage}{0.45\textwidth}
\begin{center}
\includegraphics[width=\textwidth]{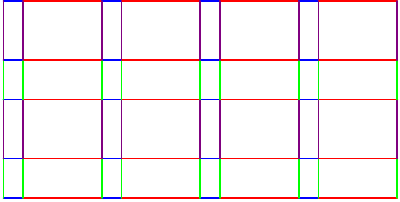}
\captionof{figure}{All quadrilaterals are rectangles.}\label{pict:4quad-rectangles}
\end{center}
\end{minipage}
\hfill
\begin{minipage}{0.45\textwidth}
\begin{center}
\includegraphics[width=\textwidth]{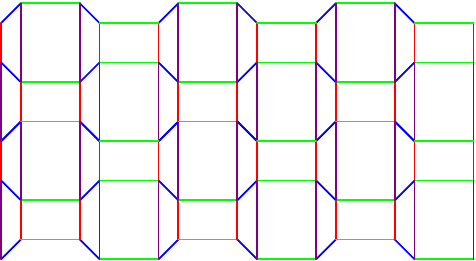}
\captionof{figure}{The vertex corona consists of two rectangles and two 
equal trapezoids.}\label{pict:4quad:2rect-2trap}
\end{center}
\end{minipage}

\begin{minipage}{0.45\textwidth}
\begin{center}
\includegraphics[width=\textwidth]{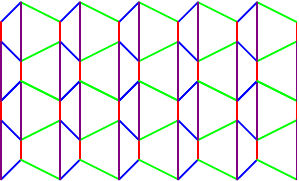}
\captionof{figure}{Tiling with two types of trapezoids.}\label{pict:4quad:2trap}
\end{center}
\end{minipage}
\hfill
\begin{minipage}{0.45\textwidth}
\begin{center}
\includegraphics[width=\textwidth]{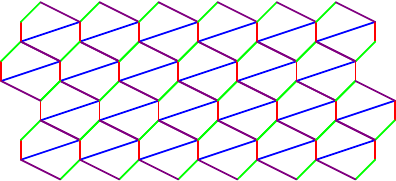}
\captionof{figure}{Tiling with equal quadrilaterals (D).}\label{pict:4quad-singletile}
\end{center}
\end{minipage}

Now we list additional families that appear when we allow two edges incident 
to one vertex to be of equal length (Figures \ref{pict:4quad-singletile 
reflected}, \ref{pict:4quad:sq-rect-2trap}, \ref{pict:4quad-nonperiodic}, 
and \ref{pict:4quad-singletile-other}).

\begin{minipage}{0.45\textwidth}
\begin{center}
\includegraphics[width=\textwidth]{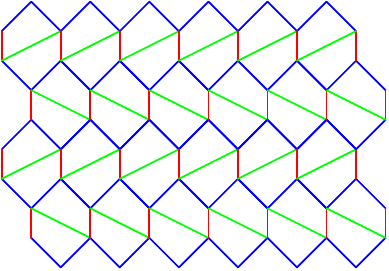}
\captionof{figure}{Tiling with equal quadrilaterals with reflections.}\label{pict:4quad-singletile reflected}
\end{center}
\end{minipage}
\hfill
\begin{minipage}{0.45\textwidth}
\begin{center}
\includegraphics[width=\textwidth]{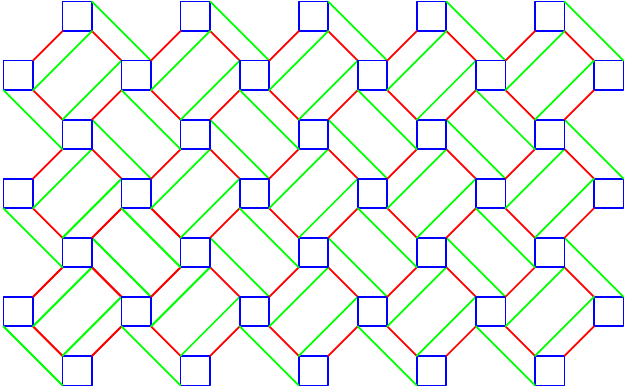}
\captionof{figure}{Tiling with square, rectangle and two trapezoids.}\label{pict:4quad:sq-rect-2trap}
\end{center}
\end{minipage}

\begin{figure}[!ht]
\begin{center}
\includegraphics[width=0.7\textwidth]{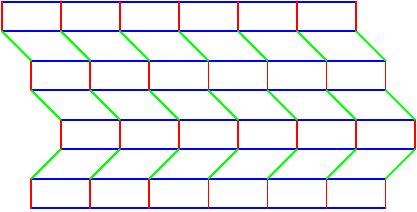}
\caption{A pair of opposite edges in the vertex corona are of the same length
and there are two different tiles. Then the tiles are parallelograms.
(NC) if one of the parallelograms is a rectangle.}\label{pict:4quad-nonperiodic}
\end{center}
\end{figure}

\newpage

\begin{minipage}{0.45\textwidth}
\begin{center}
\includegraphics[width=\textwidth]{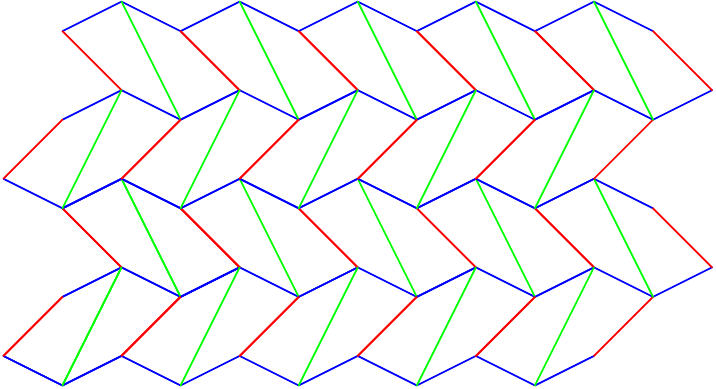}
\captionof{figure}{A pair of opposite edges are equal and all tiles are equal.}\label{pict:4quad-singletile-other}
\end{center}
\end{minipage}
\hfill
\begin{minipage}{0.45\textwidth}
\begin{center}
\includegraphics[width=\textwidth]{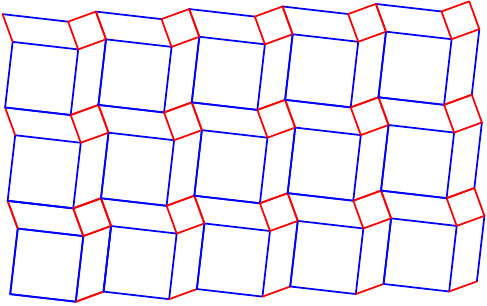}
\captionof{figure}{Tiling with two different squares and equal 
parallelograms (D).}\label{pict:4quad:2sq-2par}
\end{center}
\end{minipage}

The families of tilings with four quadrilaterals at each vertex and with two 
different edge lengths are covered by previous cases, together with the tiling
in Figure \ref{pict:4quad:2sq-2par}. 

All families of tilings with a single edge length are limit cases of the previous
cases.

\subsection{Three polygons: triangle and two $12$-gons}

In this part we will list possible tilings with vertex coronas consisting of 
one triangle and two $12$-gons.

\begin{minipage}{0.45\textwidth}
\begin{center}
\includegraphics[width=\textwidth]{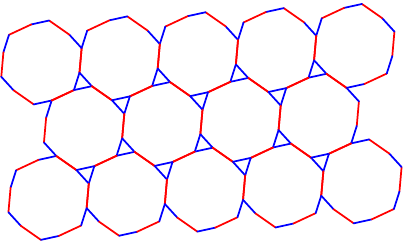}
\captionof{figure}{Angles of both $12$-gons are alternating
(D).}\label{pict:tr-2tw-alt}
\end{center}
\end{minipage}
\hfill
\begin{minipage}{0.45\textwidth}
\begin{center}
\includegraphics[width=\textwidth]{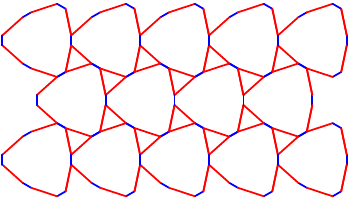}
\captionof{figure}{Some consecutive angles of $12$-gons are equal.}\label{pict:tr-2tw-nonalt}
\end{center}
\end{minipage}

\subsection{Three polygons: quadrilateral, hexagon and $12$-gon}

There is only one family of tilings with vertex corona consisting of 
quadrilateral, hexagon and $12$-gon.

\begin{figure}[!ht]
\begin{center}
\includegraphics[width=0.7\textwidth]{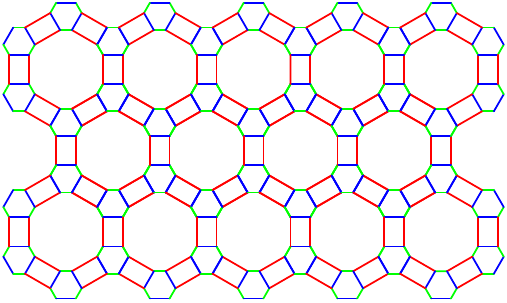}
\caption{Tiling with congruent coronas consisting of quadrilateral, 
hexagon and $12$-gon.}\label{pict:quad-hex-tw}
\end{center}
\end{figure}

\subsection{Three polygons: quadrilateral and two octagons}

Every vertex is incident to three edges, two of them are joint edges 
of a quadrilateral and an octagon. First we list all tilings where these 
two edges are different.

\begin{minipage}{0.45\textwidth}
\begin{center}
\includegraphics[width=\textwidth]{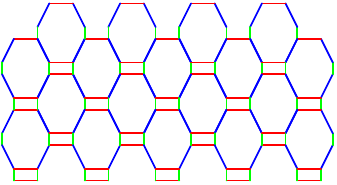}
\captionof{figure}{Octagons has edges from both families green and red.}\label{pict:quad-2oct-alldiff1}
\end{center}
\end{minipage}
\hfill
\begin{minipage}{0.45\textwidth}
\begin{center}
\includegraphics[width=\textwidth]{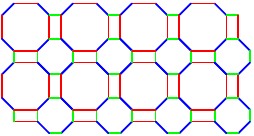}
\captionof{figure}{Each octagon contains edges only from one family either 
red or green.}\label{pict:quad-2oct-alldiff2}
\end{center}
\end{minipage}

The second case contains families with equal edges between quadrilaterals 
and octagons.

\begin{minipage}{0.45\textwidth}
\begin{center}
\includegraphics[width=\textwidth]{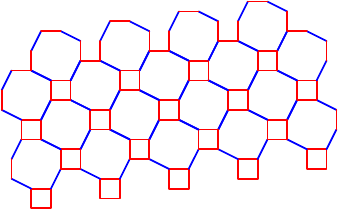}
\captionof{figure}{Angles of both octagons are alternated.}\label{pict:quad-2oct-eqalt}
\end{center}
\end{minipage}
\hfill
\begin{minipage}{0.45\textwidth}
\begin{center}
\includegraphics[width=\textwidth]{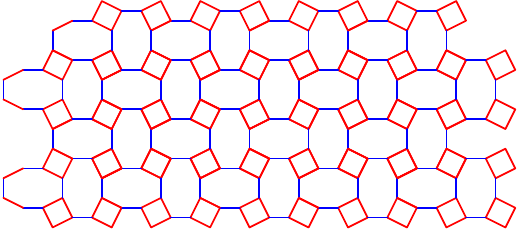}
\captionof{figure}{Some consecutive angles of octagons are equal.}\label{pict:quad-2oct-egnonalt}
\end{center}
\end{minipage}

\subsection{Three polygons: three hexagons} 
In this subsection we present all possible families of tilings with congruent 
vertex coronas consisting of three hexagons. Each family has a short description 
of the most important metrical properties.

The first group contains the tilings where all three edges incident to one vertex 
have different length.

\begin{minipage}{0.45\textwidth}
\begin{center}
\includegraphics[width=\textwidth]{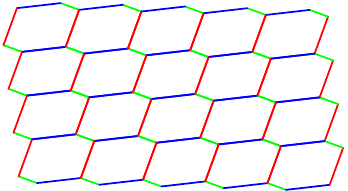}
\captionof{figure}{All hexagons are translations of each other
(D).}\label{pict:3hex-alldifftransl}
\end{center}
\end{minipage}
\hfill
\begin{minipage}{0.45\textwidth}
\begin{center}
\includegraphics[width=\textwidth]{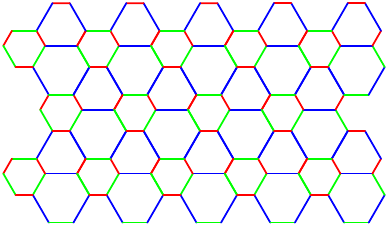}
\captionof{figure}{All hexagons are different and have alternating 
colourings.}\label{pict:3hex-alldiffalt}
\end{center}
\end{minipage}

\begin{figure}[!ht]
\begin{center}
\includegraphics[width=0.5\textwidth]{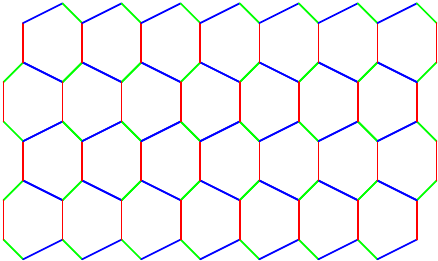}
\caption{Hexagons are the same but odd rows are reflected.}\label{pict:3hex-alldiffother}
\end{center}
\end{figure}

In the sequel we list the possible families with two equal 
edges incident to every vertex. We omit those tilings that can be achieved 
from previously described families (Figures 
\ref{pict:3hex-alldifftransl}--\ref{pict:3hex-alldiffother}) by letting two 
different edges to be of equal lengths without using the additional 
degree of freedom obtained from the possibility of having different angles.

\begin{minipage}{0.45\textwidth}
\begin{center}
\includegraphics[width=\textwidth]{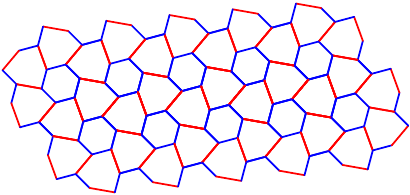}
\captionof{figure}{Two incident edges are equal (D).}\label{pict:3hex-twoequal}
\end{center}
\end{minipage}
\hfill
\begin{minipage}{0.45\textwidth}
\begin{center}
\includegraphics[width=\textwidth]{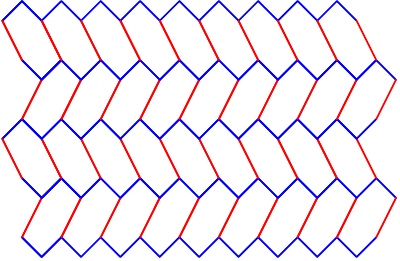}
\captionof{figure}{Two edges are of equal length. This allows using
reflected coronae with respect to the case in Figure 
\ref{pict:3hex-alldifftransl}.}\label{pict:3hex-twoequalnoreg}
\end{center}
\end{minipage}

The cases where all edges are of equal length are obtained from the previous 
cases by letting some different edge lengths to be equal. For example, if we 
force all edges in Figure \ref{pict:3hex-alldiffalt} to be equal then we will 
obtain the tiling by regular hexagons.

\newpage

\section{Non face-to-face monocoronal tiling}\label{class-nonfacetoface}

Here we list all possible families of non face-to-face monohedral tilings.
We will group them with respect to polygons that are incident to each 
vertex (except the one polygon that contributes not a vertex, but an edge).

\subsection{Triangle and hexagon} 
We start from the case where each hexagon is regular.

\begin{minipage}{0.45\textwidth}
\begin{center}
\includegraphics[width=\textwidth]{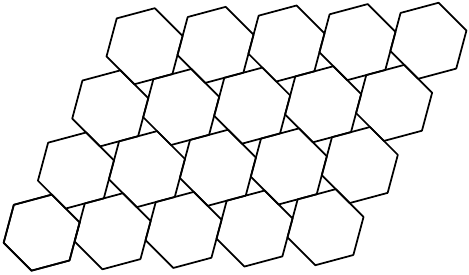}
\captionof{figure}{Tiling with regular hexagon and regular triangle. 
The edge of the hexagon is longer (D).}\label{pict:non-f2f-reg-tr-hex-0}
\end{center}
\end{minipage}
\hfill
\begin{minipage}{0.45\textwidth}
\begin{center}
\includegraphics[width=\textwidth]{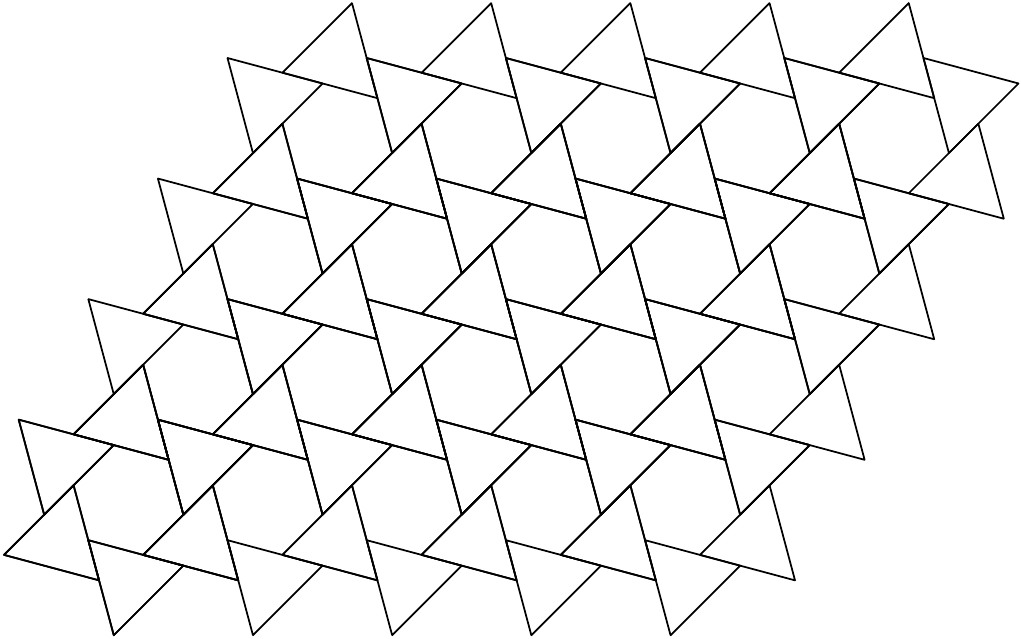}
\captionof{figure}{Tiling with regular hexagon and regular triangle. 
The edge of the hexagon is shorter (D).}\label{pict:non-f2f-reg-tr-hex-1}
\end{center}
\end{minipage}

The second case is when the hexagon is non-regular. In that case any of its 
longer side touches two equal regular triangles.

\begin{figure}[!ht]
\begin{center}
\includegraphics[width=0.6\textwidth]{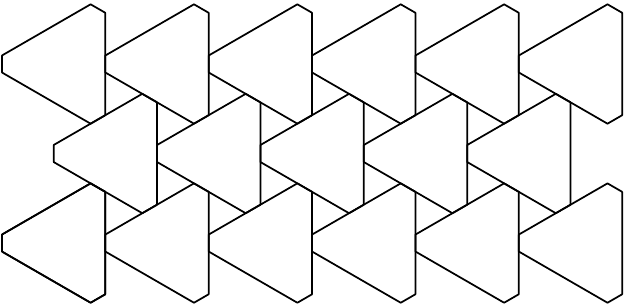}
\caption{Tiling with regular triangle and non-regular hexagon.}\label{pict:non-f2f-tr-nonreghex-1}
\end{center}
\end{figure}

\subsection{Two quadrilaterals} There are two significantly different cases: 
the intersection of two quadrilaterals incident to the vertex is an entire 
edge of both of them, or it is an entire edge of one of them and only a part 
of an edge of the other. We start our list with the first case.

\begin{minipage}{0.45\textwidth}
\begin{center}
\includegraphics[width=\textwidth]{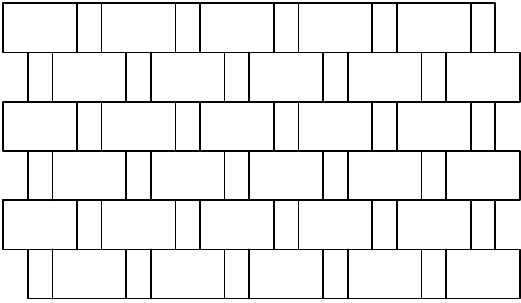}
\captionof{figure}{Tiling with two types of rectangles.}\label{pict:non-f2f-4q-two-rect}
\end{center}
\end{minipage}
\hfill
\begin{minipage}{0.45\textwidth}
\begin{center}
\includegraphics[width=\textwidth]{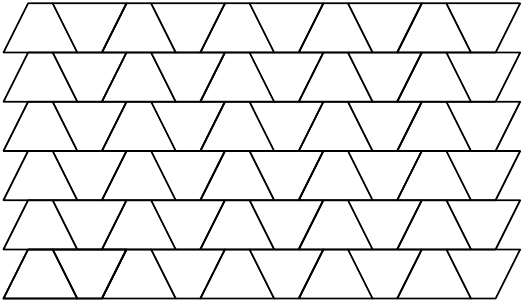}
\captionof{figure}{Tiling with one type of trapezoid.}\label{pict:non-f2f-4q-trap}
\end{center}
\end{minipage}

\begin{minipage}{0.45\textwidth}
\begin{center}
\includegraphics[width=\textwidth]{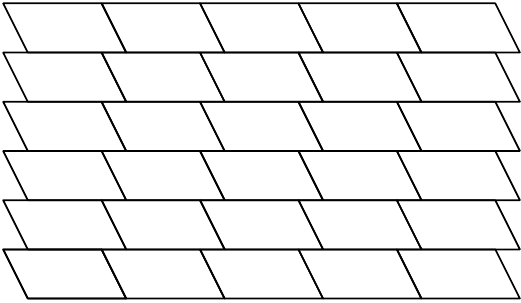}
\captionof{figure}{Tiling with one type of parallelogram (D).}\label{pict:non-f2f-4q-one-par}
\end{center}
\end{minipage}
\hfill
\begin{minipage}{0.45\textwidth}
\begin{center}
\includegraphics[width=\textwidth]{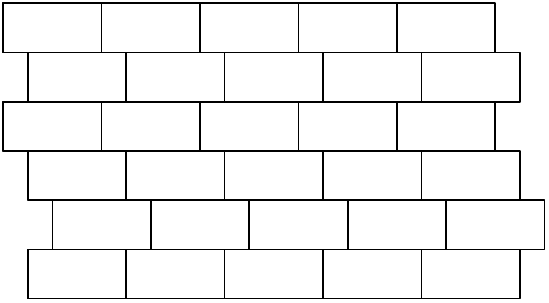}
\captionof{figure}{Tiling with one type of rectangle (NC).}\label{pict:non-f2f-4q-one-rect}
\end{center}
\end{minipage}

And there are two additional tilings for the second case.

\begin{minipage}{0.45\textwidth}
\begin{center}
\includegraphics[width=\textwidth]{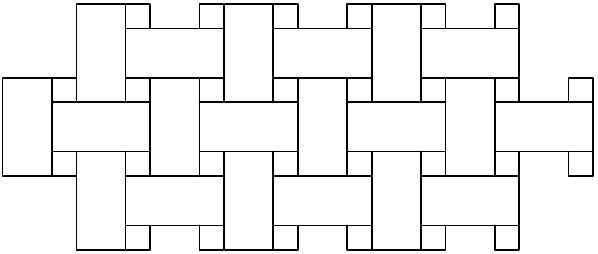}
\captionof{figure}{Tiling with square and rectangle.}\label{pict:non-f2f-4q-sq-rect}
\end{center}
\end{minipage}
\hfill
\begin{minipage}{0.45\textwidth}
\begin{center}
\includegraphics[width=\textwidth]{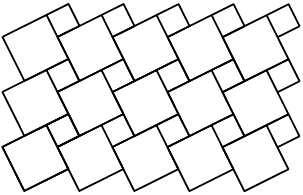}
\captionof{figure}{Tiling with two types of squares (D).}\label{pict:non-f2f-4q-two-sq}
\end{center}
\end{minipage}

\subsection{Three triangles} 
As in the previous case the tilings are combined into families with 
similar incidence structure of the vertex corona.

\begin{minipage}{0.45\textwidth}
\begin{center}
\includegraphics[width=\textwidth]{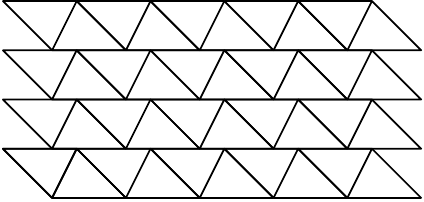}
\captionof{figure}{Tiling with one type of arbitrary triangle (D).}\label{pict:non-f2f-3tr-arbitrary}
\end{center}
\end{minipage}
\hfill
\begin{minipage}{0.45\textwidth}
\begin{center}
\includegraphics[width=\textwidth]{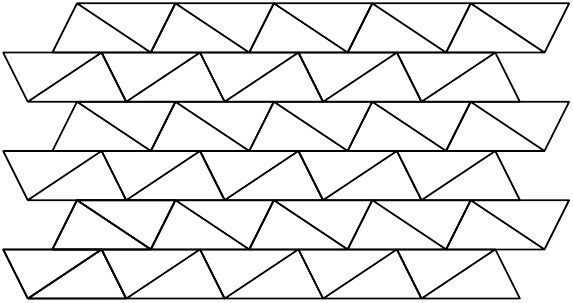}
\captionof{figure}{Tiling with one type of arbitrary triangle but 
odd rows are reflected.}\label{pict:non-f2f-tr-arbitrary-refl}
\end{center}
\end{minipage}

\begin{figure}[!ht]
\begin{center}
\includegraphics[width=0.55\textwidth]{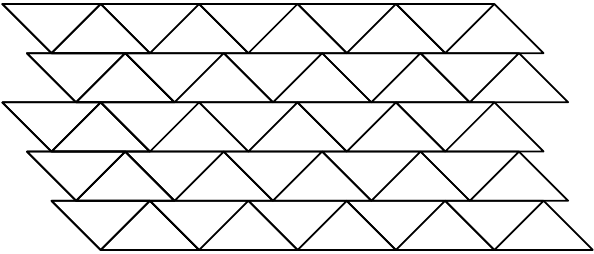}
\caption{Tiling with isosceles triangles (NC).}\label{pict:non-f2f-3tr-isosceles}
\end{center}
\end{figure}

\begin{minipage}{0.45\textwidth}
\begin{center}
\includegraphics[width=\textwidth]{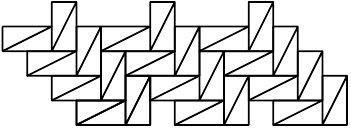}
\captionof{figure}{Tiling with one type of arbitrary triangle. 
Two triangles in the vertex corona share an entire long edge,
the other two share part of a medium 
edge.}\label{pict:non-f2f-3tr-arbitrary-twobig-onesmall}
\end{center}
\end{minipage}
\hfill
\begin{minipage}{0.45\textwidth}
\begin{center}
\includegraphics[width=\textwidth]{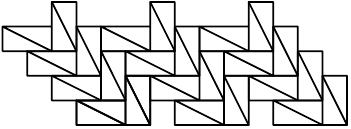}
\captionof{figure}{Tiling with one type of arbitrary triangle. 
Two triangles in the vertex corona share an entire long edge,
in the other two a medium edge touches a short 
edge.}\label{pict:non-f2f-tr-arbitrary-onebig-twosmall}
\end{center}
\end{minipage}

\begin{figure}[!ht]
\begin{center}
\includegraphics[width=0.6\textwidth]{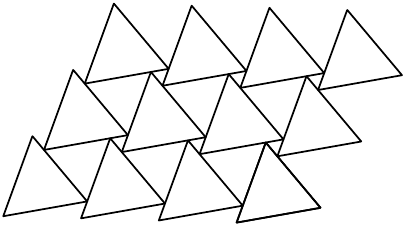}
\caption{Tiling with three types of regular triangles
(D).}\label{pict:non-f2f-3tr-three-regular}
\end{center}
\end{figure}

\section{Tilings with symmetry groups $\ast \ast$, 
$\times \times$, and $\circ$}\label{no-rotations}

\begin{figure}[!ht]
\includegraphics[width=0.8\textwidth]{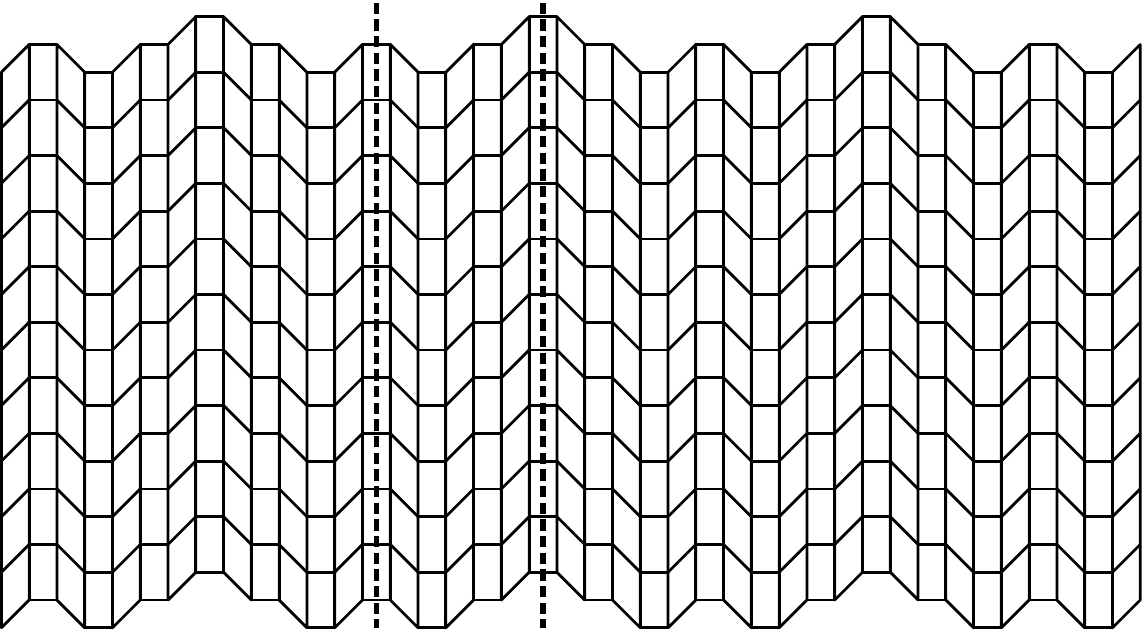}
\caption{A crystallographic tiling where layers repeat according to the 
periodic sequence $\ldots101100\ldots$. It is invariant under mirror 
reflection in the two axes indicated by dashed lines. \label{pict:star-star}}
\end{figure}

\begin{figure}[!ht]
\includegraphics[width=0.8\textwidth]{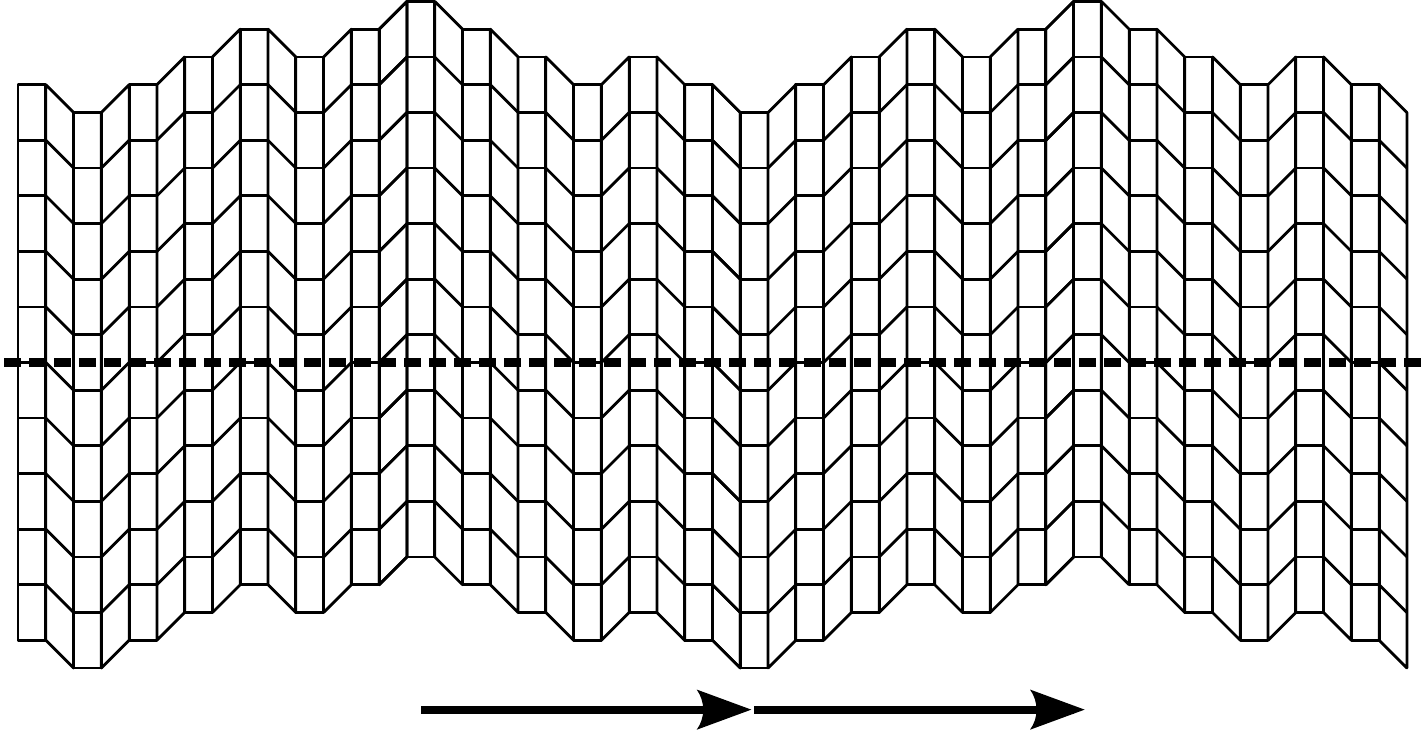}
\caption{A crystallographic tiling where layers repeat according to the 
periodic sequence $\ldots111011000100 \ldots$. It is not invariant under 
any reflection, but it is invariant under a glide reflection: a reflection in the 
axis indicated by the dashed line, followed by a translation 
indicated by one of the arrows.\label{pict:norefl-norot}}
\end{figure}

\begin{figure}[!ht]
\includegraphics[width=0.8\textwidth]{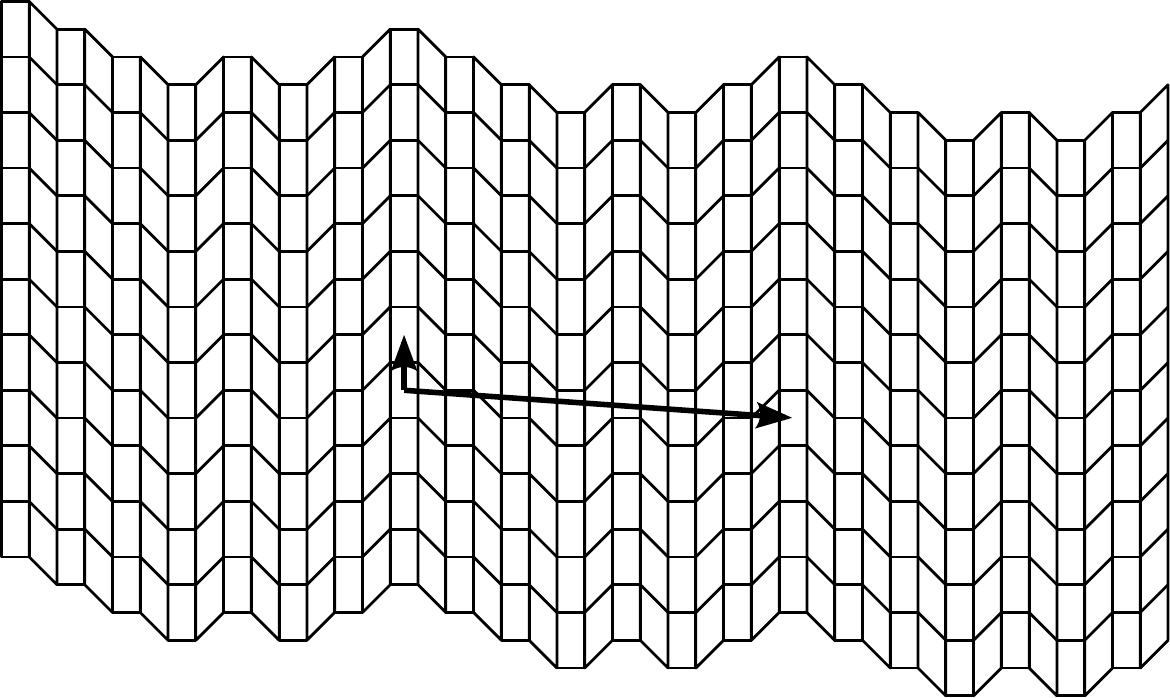}
\caption{A crystallographic tiling where layers repeat according to the 
periodic sequence $\ldots0001011 \ldots$. Its symmetry group contains 
translations only. The arrows in the image indicate two translation that 
generate the entire symmetry group.  \label{pict:only-transl}}
\end{figure}


\begin{thebibliography}{ZZZZ}

\bibitem{baa-grim}
M.~Baake, U.~Grimm:
{\em Aperiodic Order. A Mathematical Invitation}, 
Cambridge University Press (2013).

\bibitem{Bieb1}
L.~Bieberbach:
\"Uber die Bewegungsgruppen der Euklidischen R\"aume. (Erste Abhandlung), 
{\em Math.~Ann.} 70 (1911) 297-336.

\bibitem{Bieb2} 
L.~Bieberbach:
\"Uber die Bewegungsgruppen der euklidischen R\"aume. (Zweite Abhandlung.) 
Die Gruppen mit einem endlichen Fundamentalbereich,
{\em Math.~Ann.} 72, (1912) 400-412.

\bibitem{Bor}
K.~B\"or\"oczky: 
G\"ombkit\"olt\'esek \'alland\'o g\"orb\"ulet\"u terekben I, II, 
{\em Mat.~Lapok} 25 (1974) 265-306, 26 (1974) 67-90.

\bibitem{BCG}
H.~Brugiel, J.H.~Conway, C.~Goodman-Strauss: 
{\em Symmetry of Things}, A.K.~Peters, Wellesley MA (2008).

\bibitem{DF} 
N.~Dolbilin, D.~Frettl\"oh:
Properties of generalized B\"or\"oczky tilings in high dimensional 
hyperbolic spaces, 
{\em European J.~Combin.}, 31 (2010) 1181-1195.

\bibitem{dol-sch1}
N.~Dolbilin, D.~Schattschneider:
The local theorem for tilings, 
in: {\em Quasicrystals and discrete geometry}, 
Fields Inst.~Monogr. 10, AMS, Providence, RI (1998) pp 193-199.

\bibitem{dol-sch2}
D.~Schattschneider, N.~Dolbilin:
One corona is enough for the Euclidean plane,
in: {\em Quasicrystals and discrete geometry}, 
Fields Inst.~Monogr. 10, AMS, Providence, RI (1998) pp 207-246.

\bibitem{fogg}
N.\ Pytheas Fogg:
{\em Substitutions in Dynamics, Arithmetics and Combinatorics},
Lecture Notes in Mathematics 1794,
Springer, Berlin (2002).

\bibitem{FG}
D.~Frettl\"oh, A.~Glazyrin:
The lonely vertex problem,
{\em Contributions to Algebra and Geometry},
50 (2009) 71-79.

\bibitem{gs}
B.~Gr{\"u}nbaum, G.C.~Shephard, \emph{Tilings and patterns}, 
W.H.~Freeman, New York (1987). 

\bibitem{heesch}
H.~Heesch:
Aufbau der Ebene aus kongruenten Bereichen,
{\em Nachr.~Ges.~Wiss.~G\"ottingen (2)} 1 (1935) 115-117.

\bibitem{reinh}
K.~Reinhardt:
{\em \"Uber die Zerlegung der Ebene in Polygone},
Dissertation Univ.~Frankfurt a.~M., Borna-Leipzig (1918).

\bibitem{wik-orb} Wikipedia: Orbifold notation, 
{\tt http://en.wikipedia.org/wiki//Orbifold\_notation}, 
version of 22.~Oct.~2013.

\end{thebibliography}
\end{document}